\theoremstyle{plain}
\newtheorem{mydef}{Definition}
\newtheorem{myprop}[mydef]{Proposition}
\theoremstyle{plain}
\newtheorem{mythm}[mydef]{Theorem}
\newtheorem{mycor}[mydef]{Corollary}
\newtheorem{mylemma}[mydef]{Lemma}
\theoremstyle{remark}
\newtheorem{myex}[mydef]{Example}
\newtheorem{myremark}[mydef]{Remark}
\numberwithin{equation}{section}
\newcommand{\mbf}[1]{\mathbf{#1}}
\newcommand{\norm}[1]{\left\lVert #1 \right\rVert}
\newcommand{\dist}[2]{d\left( #1,#2 \right)}
\newcommand{\cone}{\textup{cone}}
\newcommand{\fora}{\textup{for all }}
\newcommand{\for}{\textup{for }}
\newcommand{\relconst}{\tilde\varepsilon}
\DeclareMathOperator*{\id}{Id}
\DeclareMathOperator*{\Fix}{Fix}
\newcommand{\N}{\mathbb{N}}
\newcommand{\R}{\mathbb{R}}
\newcommand{\Ball}{\mathbb{B}}
\newcommand{\E}{\mbf{E}}
\newcommand{\point}{\overline x}
\newcommand{\set}{\Omega}
\renewcommand{\equiv}{:=}
\newcommand{\theauthor}{Robert Hesse }
\newcommand{\thedate}{\today }
\newcommand{\thetitle}{Projection Methods for nonconvex feasibility problems}
\date{\thedate}
\author{\theauthor}
\title{\thetitle}
\begin{document}

\title{\textrm{Nonconvex notions of regularity and convergence of fundamental 
algorithms for feasibility problems
}}

\author{
Robert Hesse\thanks{Institut f\"ur Numerische und Angewandte Mathematik\
Universit\"at G\"ottingen,\ Lotzestr.~16--18, 37083 G\"ottingen, Germany. E-mail: \texttt{hesse@math.uni-goettingen.de}.} ~and
D.\ Russell Luke\thanks{Institut f\"ur Numerische und Angewandte Mathematik\
Universit\"at G\"ottingen,\ Lotzestr.~16--18, 37083 G\"ottingen, Germany. E-mail: \texttt{r.luke@math.uni-goettingen.de}.} 
}
\date{\today}

\maketitle

\vskip 8mm

\abstract{
We consider projection algorithms for solving (nonconvex) feasibility problems in Euclidean spaces.
Of special interest are the Method of Alternating Projections (MAP) and the Douglas-Rachford algorithm (DR).
In the case of convex feasibility, firm nonexpansiveness of projection mappings is a global property that yields 
global convergence of MAP and for consistent problems DR.
A notion of local sub-firm nonexpansiveness with respect to the intersection is introduced for consistent feasibility problems.
This, together with a coercivity condition that relates to the regularity of the collection of sets at points in the intersection,  
yields local linear convergence of MAP for a wide class of nonconvex problems, 
and even local linear convergence of nonconvex instances of the Douglas-Rachford algorithm. }
\bigskip
 
{\small \noindent {\bfseries 2010 Mathematics Subject
Classification:} {Primary 65K10; 
Secondary 47H04, 49J52, 49M20, 49M37, 65K05, 90C26, 90C30
}}
\bigskip

\noindent {\bfseries Keywords:}
Averaged alternating reflections,
convex set,
constraint qualification, 
Douglas Rachford, 
linear convergence,
method of alternating projections,
nonconvex set, 
normal cone,
projection operator,
reflection operator, 
metric regularity, strong regularity, linear regularity,
superregularity, firmly nonexpansive, nonexpansive, 
quasi nonexpansive, Fej\'er monotone.


\section{Introduction}
In the last decade there has been significant progress in the understanding of 
convergence of algorithms for solving generalized equations for nonmonotone 
operators, and in particular those 
arising from variational problems such as minimization or maximization of functions, feasibility, 
variational inequalities and minimax problems.  Early efforts focused on the proximal point 
algorithm \cite{Pen02, Ius03, CombettesPennanen04} and notions of {\em (co)hypomonotonicity} which is closely related to 
{\em prox-regularity} of functions \cite{PolRockThib00}.  Other works have focused on {\em metric
regularity} and its refinements \cite{Ara05, AragonGeoffroy07, KlatteKummer09}.  Proximal-type algorithms
have been studied for functions satisfying the Kurdyka-\L ojasiewicz Inequality in \cite{AttouchBolteRedontSoubeyran}. 
In a more limited context, the method of alternating projections (MAP) has been investigated in \cite{LLM, BLPW2} 
with the aim of formulating {\em dual} characterizations of regularity requirements for linear convergence via the 
{\em normal cone} and its variants.  

The framework we present here 
generalizes the tools for the analysis of fixed-point iterations of operators that violate 
the classical property of firm nonexpansiveness in some quantifiable fashion.  
As such, our approach is more closely related to the 
ideas of \cite{Pen02} and hypomonotonicity through the resolvent mapping, however 
our application to MAP bears a direct resemblance to the more classical {\em primal} characterizations of 
regularity described in \cite{BauBorSIREV} (in particular what they call  ``linear regularity'').  
There are also some parallels between what we call $(S,\varepsilon)$-firm nonexpansiveness 
and $\varepsilon$-Enlargements of maximal monotone operators \cite{Burachikbook}, though this 
is beyond the scope of this paper.  Our goal is to introduce the essential tools we make use of with 
a cursory treatment of the connections to other concepts in the literature, and to apply these tools
to the MAP and Douglas-Rachford algorithms, comparing our results to the best known results at this time.  

We review the basic definitions and classical definitions and results 
below. In section \ref{s:S-epsilon-firm} we introduce our relaxations of the notions of firm-nonexpansiveness and 
set regularity.  Our main abstract result 
concerning fixed-point iterations of mappings that violate the classical firm-nonexpansive assumptions 
is in section \ref{s:convergence}.   We specialize in subsequent subsections to MAP and the 
Douglas-Rachford algorithm. 
Our statement of linear convergence of MAP is as general as the results reported in \cite{BLPW2}, with 
more elementary proofs, although our estimates for the radius of convergence are more conservative.  
The results on local linear convergence of nonconvex instances of Douglas-Rachford are new and 
provide some evidence supporting the conjecture  
that, asymptotically, Douglas-Rachford converges more slowly (albeit still linearly) than simple alternating projections.  Our estimates
of the rate of convergence for both MAP and Douglas-Rachford are not optimal, but to our knowledge the most general to date.  

We also show that strong regularity 
conditions on the collection of {\em affine} sets are in fact {\em necessary} for linear convergence of iterates of the 
Douglas-Rachford algorithm to the intersection, 
in contrast to MAP where the same conditions are sufficient, but not necessary \cite{BLPW2}.  
This may seem somewhat spurious to experts since, as is well-known, the Douglas-Rachford iterates 
themselves are not of interest, but rather their shadows or projections 
onto one of the sets \cite{LionsMercier, BauComLuke}.  Indeed, in the convex setting where local is global, the shadows of the iterates of 
the Douglas-Rachford algorithm could converge even though the iterates themselves diverge.  This happens in particular 
when the sets do not intersect, but have instead {\em best approximation points} \cite[Theorem 3.13]{BauComLuke}.  
The nonconvex setting is much less forgiving, however.  Indeed, existence of local best approximation points does 
not guarantee convergence of the shadows to best approximation points in the nonconvex setting  \cite{Luke08}, and 
so convergence of the sequence itself is essential.  As nonconvex settings are our principal interest, we focus 
on convergence of the iterates of the Douglas-Rachford algorithm instead of the shadows, and in particular convergence of these 
iterates to the intersection of collections of sets.  We leave a fuller examination of the shadow sequences to future 
work.  

\subsection{Basics/Notation}
$\E$ is a Euclidean space. 
We denote the closed unit ball centered at the origin by $\Ball$ and the ball of radius $\delta$ centered at $\point\in\E$ by 
$\Ball_\delta(\point):=\left\{ x\in\E \middle | \norm{x-\point}\leq\delta\right\}$.  
When the $\delta$-ball is centered at the origin we write $\Ball_\delta$. 
The notation ``$\rightrightarrows$'' indicates that this mapping in general is \emph{multi-valued}.
The composition of two \emph{multi-valued} mappings $T_1,T_2$ is pointwise defined by $T_2\ T_1 x=\cup_{y\in T_1 x}T_2 y$.
A nonempty subset $K$ of $\E$  is a cone if, for all $\lambda>0$,   $\lambda K \equiv\{ \lambda k~|~k \in  K\} \subseteq K$. 
The smallest cone containing a set $\set\subset\E$  is denoted $\cone(S)$.

\begin{mydef}
 Let $\set\subset\E$ be nonempty, $x\in\E$. The \emph{distance} of $x$ to $\set$ is defined by
\begin{align}
\dist x \set:=\inf_{y\in\set}\norm{x-y}.\label{eq:distance}
\end{align}
\end{mydef}
\begin{mydef}[projectors/reflectors]
Let $\set\subset\E$ be nonempty and $x\in\E$. The (possibly empty) set of all best approximation points from $x$ to $\set$ denoted $P_\set(x)$ (or $P_\set x$), is given by
\begin{align}
P_\set (x):=&\left\{ y\in\set~\middle|~\norm{x-y}=\dist x \set\right\} \label{eq:projection}.
\end{align}
 The mapping $P_\set\rightrightarrows\set$ is called the \emph{metric projector}, or {\em projector}, onto $\set$.  We 
call an element of  $P_\set(x)$ a {\em projection}. 
The reflector $R_\set:\E\rightrightarrows\E$ to the set $\set$ is defined as
\begin{align}
R_\set x:=2P_\set x- x, \label{eq:reflector}
\end{align}
 for all $x\in\E$.
\end{mydef}
Since we are on a Euclidean space $\E$ convexity and closedness of a subset $C\subset\E$ is sufficient for the projector (respectively the reflector) to be single valued.
Closedness of a set $\set$ suffices for the set $\set$ being \emph{proximinal}, i.e. $P_C x\neq\emptyset$ for all $x\in\E$ 
(For a modern treatment see \cite[Corollary 3.13]{BauCom}.

\begin{mydef}[Method of Alternating Projections]\label{MAP}
For two sets $A,B\subset\E$ we call the mapping 
\begin{align}\label{eq:MAP}
T_{MAP}x=P_A P_B x
\end{align}
the Method of Alternating Projections operator.  
We call the MAP algorithm, or simply MAP, the corresponding Picard iteration,
\begin{align}\label{alg:MAP}
x_{n+1}\in T_{MAP}x_n, \quad n=0,1,2,\dots
\end{align}
for $x_0$ given. 
\end{mydef}

\begin{mydef}[Averaged Alternating Reflections/Douglas Rachford]\label{AAR}
For two sets $A,B\subset\E$ we call the mapping 
\begin{align}
T_{DR}x=\frac 12\left(R_A R_B x+x\right)\label{eq:AAR}
\end{align}
the Douglas-Rachford operator.
We call the {\em Douglas-Rachford algorithm}, or simply {\em Douglas-Rachford}, the corresponding Picard iteration,
\begin{align}\label{alg:AAR}
x_{n+1}\in T_{DR}x_n, \quad n=0,1,2,\dots
\end{align}
for $x_0$ given. 
\end{mydef}

\begin{myex}
\label{examples}
The following easy examples will appear throughout this work and serve to illustrate the regularity concepts
we introduce and the convergence behavior of 
the algorithms under consideration.
\begin{enumerate}[{(i)}]
 \item\label{ex:lines2}
 Two lines in $\R^2$:
\begin{align} 
  A=&\left\{ (x_1,x_2)\in\R^2\mid x_2=0\right\}\subset\R^2\\
  B=&\left\{ (x_1,x_2)\in\R^2\mid x_1=x_2\right\}\subset\R^2.
 \end{align} 
We will see that MAP and Douglas-Rachford converge with a linear rate to the intersection.
\item \label{ex:lines3}
Two lines in $\R^3$:
\begin{align} 
  A=&\left\{ (x_1,x_2,x_3)\in\R^3\mid x_2=0,x_3=0\right\}\subset\R^3\\
  B=&\left\{ (x_1,x_2,x_3)\in\R^3\mid x_1=x_2,x_3=0\right\}\subset\R^3.
 \end{align} 
After the first iteration step MAP shows exactly the same convergence behavior as in the first example. Douglas-Rachford does not converge to $\{0\}=A\cap B$. 
All iterates from starting points on the line $\{t(0,0,1)~|~ t\in \R \}$ are fixed points of the Douglas Rachford operator.  On the other hand, iterates from 
starting points in $A+B$ stay in $A+B$, and the case then reduces to Example (\ref{ex:lines2}).
 \item\label{ex:lineball}
 A line and a ball intersecting in one point:
 \begin{align} 
  A=&\left\{ (x_1,x_2)\in\R^2\mid x_2=0\right\}\subset\R^2\\
  B=&\left\{ (x_1,x_2)\in\R^2\mid x_1^2+(x_2-1)^2\leq1\right\}.
 \end{align} 
 MAP converges to the intersection, but not with a linear rate. Douglas-Rachford has fixed points that lie outside the intersection.
 \item\label{ex:cross}
A \emph{cross} and a subspace in $\R^2$:
  \begin{align} 
   A=&\R\times\{0\}\cup \{0\}\times\R\\
  B=&\left\{ (x_1,x_2)\in\R^2\mid x_1=x_2\right\}.
 \end{align} 
 This example relates to the problem of \emph{sparse-signal recovery}.
 Both MAP and Douglas-Rachford converge globally to the intersection $\{0\}=A\cap B$, though $A$ is nonconvex. 
The convergence of both methods is covered by the theory built up in this work.
 \item\label{ex:Borwein} A circle and a line:
 \begin{align} 
  A=&\left\{ (x_1,x_2)\in\R^2\mid x_2=\sqrt{2}/2\right\}\subset\R^2\\
  B=&\left\{ (x_1,x_2)\in\R^2\mid x_1^2+x_2^2=1\right\}.
 \end{align} 
 This example is of our particular interest, since it is a simple model case of the \emph{phase retrieval problem}.
 So far the only {\em direct} nonconvex convergence results for Douglas-Rachford are related to this model 
case, see \cite{BorweinAragon,BorweinSims}.
 Local convergence of MAP is covered by \cite{LLM,BLPW2} as well as by the results in this work.
\end{enumerate}
\qed
\end{myex}

\subsection{Classical results for (firmly) nonexpansive mappings}
To begin, we recall (firmly) nonexpansive mappings and 
their natural association with projectors and reflectors on convex sets. 
We later extend this notion to nonconvex settings where the algorithms involve
set-valued mappings.
\begin{mydef}
Let $\set\subset\E$ be nonempty.
$T:\set\to\E$ is called \emph{nonexpansive}, if
\begin{align}
\norm{Tx-Ty}\leq\norm{x-y}\label{eq:nonexpansive}
\end{align}
holds for all $x,y\in\set$.

$T:\set\to\E$ is called \emph{firmly nonexpansive}, if
\begin{align}
\norm{Tx-Ty}^2+\norm{(\id -T)x-(\id -T)y}^2\leq\norm{x-y}^2\label{eq:firmlynonexpansive}
\end{align}
holds for all $x,y\in\set$.
\end{mydef}
\begin{mylemma}[Proposition 4.2 \cite{BauCom}]
\label{lemma:firmlynonexpansive}
 Let $\set\subset \E$ be nonempty and let $T:\set\to\E$.
The following are equivalent
\begin{enumerate}[{(i)}]
\item $T$ is firmly nonexpansive on $\set$ 
\item $2 T- \id $ is nonexpansive on $\set$
\item $\norm{Tx-Ty}^2\leq\langle Tx-Ty,x-y\rangle$ for all $x,y\in \set$
\end{enumerate}
\end{mylemma}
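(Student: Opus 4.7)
The plan is to prove all three conditions equivalent to the single scalar inequality in (iii) by simple inner-product manipulations, exploiting the polarization identity on the Euclidean space $\E$. This is much cleaner than running a three-way cycle and reveals why the three conditions are in fact the same.

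First I would fix $x,y\in\set$ and introduce the shorthand $u:=x-y$ and $v:=Tx-Ty$. All three statements become single inequalities in $u$ and $v$: condition (i) reads $\norm{v}^2+\norm{u-v}^2\le\norm{u}^2$, condition (ii) (after writing $(2T-\id)x-(2T-\id)y=2v-u$ and squaring) reads $\norm{2v-u}^2\le\norm{u}^2$, and condition (iii) reads $\norm{v}^2\le\langle v,u\rangle$.

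Next I would expand the two square norms using $\norm{a-b}^2=\norm{a}^2-2\langle a,b\rangle+\norm{b}^2$. A direct computation gives
\begin{equation*}
\norm{v}^2+\norm{u-v}^2-\norm{u}^2 \;=\; 2\bigl(\norm{v}^2-\langle v,u\rangle\bigr)
\end{equation*}
and
\begin{equation*}
\norm{2v-u}^2-\norm{u}^2 \;=\; 4\bigl(\norm{v}^2-\langle v,u\rangle\bigr).
\end{equation*}
Since both right-hand sides are nonnegative multiples of the same quantity $\norm{v}^2-\langle v,u\rangle$, the inequalities in (i), (ii), (iii) are equivalent for each pair $x,y$, and hence as statements ranging over all $x,y\in\set$.

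There is essentially no obstacle here: the argument is a one-line algebraic reduction once the substitution $u=x-y$, $v=Tx-Ty$ is made. The only minor care needed is to note that in (ii) one must square both sides, which is harmless since both $\norm{2v-u}$ and $\norm{u}$ are nonnegative. Since the paper only requires the result in the Euclidean (hence Hilbert) setting, no further technicalities arise, and a reference to \cite{BauCom} could even stand in for the explicit computation.
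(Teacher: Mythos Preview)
Your proof is correct. The paper does not actually prove this lemma; it merely states it with a citation to \cite[Proposition~4.2]{BauCom}, so your explicit computation via the substitution $u=x-y$, $v=Tx-Ty$ and the two identities
\[
\norm{v}^2+\norm{u-v}^2-\norm{u}^2=2\bigl(\norm{v}^2-\langle v,u\rangle\bigr),\qquad
\norm{2v-u}^2-\norm{u}^2=4\bigl(\norm{v}^2-\langle v,u\rangle\bigr)
\]
already exceeds what the paper provides and is exactly the standard argument behind the cited proposition.
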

%
%
%

\begin{mythm}[best approximation property - convex case]\label{t:best app}
 Let $C\subset\E$ be nonempty and convex, $x\in\E$ and $\point\in C$. $\point$ is the best approximation point $\point=P_C(x)$ if and only if
\begin{align}
\langle x-\point,y-\point \rangle\leq0\quad\fora y\in C. \label{eq:charBA}
\end{align}
If $C$ is a affine subspace then \eqref{eq:charBA} holds with equality.
\end{mythm}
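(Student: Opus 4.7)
The plan is to handle the two directions of the equivalence separately, using only the definition of the projector and elementary inner-product algebra, and then to upgrade the inequality to an equality under the affine hypothesis by exploiting symmetry.

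For the forward direction, I would assume $\point = P_C(x)$ and fix an arbitrary $y \in C$. By convexity of $C$, the segment $\point + \lambda(y - \point) = (1-\lambda)\point + \lambda y$ lies in $C$ for every $\lambda \in (0,1]$. The definition of the projector then gives
\[
\norm{x - \point}^2 \;\leq\; \norm{x - \point - \lambda(y - \point)}^2 \;=\; \norm{x - \point}^2 - 2\lambda\langle x - \point, y - \point\rangle + \lambda^2\norm{y - \point}^2.
\]
Cancelling, dividing by $\lambda>0$, and letting $\lambda \downarrow 0$ yields $\langle x - \point, y - \point\rangle \leq 0$, which is \eqref{eq:charBA}.

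For the converse, I would assume the inequality \eqref{eq:charBA} and expand for arbitrary $y \in C$:
\[
\norm{x - y}^2 \;=\; \norm{(x - \point) + (\point - y)}^2 \;=\; \norm{x - \point}^2 + 2\langle x - \point, \point - y\rangle + \norm{\point - y}^2.
\]
The middle term is nonnegative by hypothesis (applied to $y$), so $\norm{x - y}^2 \geq \norm{x - \point}^2$, which is exactly the statement $\point \in P_C(x)$ (and in the convex setting the projection is single-valued, so $\point = P_C(x)$).

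Finally, for the affine case, I would note that if $C$ is an affine subspace then for every $y \in C$ the reflected point $2\point - y = \point - (y - \point)$ also lies in $C$. Applying \eqref{eq:charBA} to $y$ gives $\langle x - \point, y - \point\rangle \leq 0$, while applying it to $2\point - y$ gives $\langle x - \point, -(y - \point)\rangle \leq 0$, i.e.\ $\langle x - \point, y - \point\rangle \geq 0$. Combining these forces equality.

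None of the steps looks like a genuine obstacle; the only point that requires minor care is verifying that $\point + \lambda(y - \point) \in C$ in the forward direction, which is immediate from convexity, and that affine subspaces are symmetric about any of their points, which is immediate from the definition. The proof is entirely elementary and Euclidean.
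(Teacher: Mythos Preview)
Your proof is correct and entirely standard. The paper itself does not give a self-contained argument here; it simply cites Theorem~3.14 and Corollary~3.20 of Bauschke--Combettes \cite{BauCom}. Your elementary derivation via convex combinations $\point + \lambda(y-\point)$ (for the forward direction), norm expansion (for the converse), and the reflection $y \mapsto 2\point - y$ (for the affine refinement) is exactly the classical argument underlying those references, so in substance the two agree; the only difference is that you have written out what the paper outsources to the literature.
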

\begin{proof}
For \eqref{eq:charBA} see Theorem 3.14 of \cite{BauCom},  while equality follows from Corollary 3.20 of the 
same.
\end{proof}


\begin{mythm}[(firm) nonexpansiveness of pro\-jec\-tors/\-re\-flec\-tors]\label{P:fn}
 Let $C$ be a closed, nonempty and convex set. The projector $P_C:\E\to\E$ is a firmly nonexpansive mapping and 
hence the reflector $R_C$ is nonexpansive.  If, in addition, $C$ is an affine subspace
then following conditions hold.
 \begin{enumerate}[{(i)}]
  \item\label{P:fn1}
  $P_C$ is firmly nonexpansive with equality, i.e.
\begin{align}
\norm{P_C x-P_C y}^2 +\norm{(\id -P_C)x-(\id -P_C)y}^2&=\norm{x-y}^2, \label{eq:P:fnonsubspace}
\end{align}
for all $x\in\E$.
\item \label{P:fn2}
For all $x\in\E$
\begin{align}
\norm{R_C  x-c}=\norm{x-c}\label{eq:Rdist}
\end{align}
holds for all $c\in C$.
 \end{enumerate} 
\end{mythm}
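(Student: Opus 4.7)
The plan is to build all four conclusions directly from the two previously established tools: the best approximation characterization (Theorem \ref{t:best app}) and the three-way equivalence for firm nonexpansiveness (Lemma \ref{lemma:firmlynonexpansive}). No deep machinery is needed; the entire argument is an exercise in pushing an inequality (respectively equality) through the inner product identity.

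For the convex part, I would set $u = P_C x$ and $v = P_C y$ and apply Theorem \ref{t:best app} twice, obtaining $\langle x-u, v-u\rangle \le 0$ and $\langle y-v, u-v\rangle \le 0$. Adding and rearranging gives
\begin{equation*}
\|u-v\|^2 \le \langle u-v,\, x-y\rangle,
\end{equation*}
which is exactly characterization (iii) of Lemma \ref{lemma:firmlynonexpansive}, so $P_C$ is firmly nonexpansive. Characterization (ii) of the same lemma then yields nonexpansiveness of $R_C = 2P_C - \id$ at no extra cost.

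For part (\ref{P:fn1}), I would rerun the same computation in the affine case, where Theorem \ref{t:best app} now gives \emph{equality} rather than inequality. This upgrades the previous bound to $\|u-v\|^2 = \langle u-v, x-y\rangle$. Expanding
\begin{equation*}
\|(\id-P_C)x - (\id-P_C)y\|^2 = \|(x-y) - (u-v)\|^2 = \|x-y\|^2 - 2\langle u-v,x-y\rangle + \|u-v\|^2
\end{equation*}
and substituting the equality in for $\langle u-v,x-y\rangle$ produces the desired identity \eqref{eq:P:fnonsubspace} after cancellation.

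For part (\ref{P:fn2}), set $u = P_C x$ and compute
\begin{equation*}
\|R_C x - c\|^2 - \|x - c\|^2 = \|2u - x - c\|^2 - \|x - c\|^2 = 4\langle u - x,\, u - c\rangle,
\end{equation*}
as the $\|u-c\|^2$ terms cancel after writing both norms via $(u-x) \pm (u-c)$. Since $c \in C$ and $C$ is affine, the equality version of Theorem \ref{t:best app} with $y = c$ gives $\langle x - u, c - u\rangle = 0$, which kills the remaining term and yields \eqref{eq:Rdist}. I do not foresee a genuine obstacle here; the only place one has to be careful is keeping straight which sign of the inner product comes from the best-approximation characterization, so that the upgrade from inequality (general convex) to equality (affine) is correctly propagated through the two expansions.
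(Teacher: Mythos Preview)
Your proposal is correct and is precisely the ``routine application of the definitions and Theorem~\ref{t:best app}'' that the paper invokes without spelling out; the only difference is that the paper outsources the convex case to the literature (Deutsch, Goebel--Kirk, Goebel--Reich, Zarantonello) while you derive it directly from Theorem~\ref{t:best app} and Lemma~\ref{lemma:firmlynonexpansive}. Your version is therefore more self-contained but otherwise identical in substance.
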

\begin{proof}
For the first part of the statement see \cite[Theorems 4.1 and 5.5]{Deutsch01}, \cite[Chapter 12]{GoebelKirk}, 
\cite[Propositions 3.5 and 11.2]{GoebelReich} and  \cite[Lemma 1.1]{Zarantonello}. 
The well-known refinement for affine subspaces follows by a routine application of the definitions and Theorem \ref{t:best app}.  
\end{proof}
%
%
%

\section{$(S,\varepsilon)$-firm nonexpansiveness}\label{s:S-epsilon-firm}
Up to this point, the results have concerned only convex sets, and hence the projector and related algorithms have all been single-valued.
In what follows, we generalize to nonconvex sets and therefore allow multi-valuedness of the projectors.
\begin{mylemma}\label{lemma:AAR}
 Let $A,B\subset\E$ be nonempty and closed. Let $x\in \E$. For any element  $x_+\in T_{DR}x$ 
there is a point $\tilde x\in R_AR_B x$ such that $x_+=\tfrac12(\tilde x +x)$.  Moreover, $T_{DR}$ 
satisfies the following properties. 
\begin{enumerate}[{(i)}]
 \item\label{lemma:AAR1}
 \begin{equation}\label{eq:AAR1}
\norm{x_+ - y_+}^2+\norm{\left(x - x_+\right)-\left(y -y_+\right)}^2
=\frac 1 2\norm{x-y}^2+\frac 1 2 \norm{\tilde x- \tilde y}^2
\end{equation}
where $x$ and $y$ are elements of $\E$, $x_+$ and $y_+$ are elements of $T_{DR}x$ and 
$T_{DR}y$ respectively, and $\tilde x\in R_AR_Bx $ and $\tilde y\in  R_AR_By$ are the corresponding 
points satisfying $x_+=\tfrac12(\tilde x + x)$ and $y_+=\tfrac12(\tilde y + y)$.
\item\label{lemma:AAR2}
For all $x\in \E$
\begin{align}
T_{DR}x&=\left\{ P_A (2z-x)-z +x~\middle|~ z\in P_B x\right\}.\label{eq:AAR2}
\end{align}
 \end{enumerate}
\end{mylemma}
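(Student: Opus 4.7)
The plan is to treat the two parts in reverse order: first derive the explicit pointwise formula (\ref{eq:AAR2}) in (ii) by unfolding the definitions of the reflectors and of composition of multi-valued maps, and then obtain the norm identity (\ref{eq:AAR1}) in (i) as a one-line application of the parallelogram law. The opening existence statement (``for any $x_+\in T_{DR}x$ there is $\tilde x\in R_A R_B x$ with $x_+=\tfrac12(\tilde x+x)$'') is literally the meaning of $T_{DR}x=\tfrac12(R_A R_B x+x)$ once one reads the right-hand side as the set-valued sum $\{\tfrac12(\tilde x+x)\mid \tilde x\in R_A R_B x\}$, so no work is required there.

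For (ii), I would start from $R_B x = 2P_B x - x = \{2z-x\mid z\in P_B x\}$. Using the set-valued composition rule $T_2 T_1 x=\bigcup_{y\in T_1 x}T_2 y$ stated in the Basics/Notation subsection, together with $R_A=2P_A-\id$, gives
\[
R_A R_B x \;=\; \bigcup_{z\in P_B x}\bigl(2P_A(2z-x)-(2z-x)\bigr).
\]
Halving and adding $x$ inside each branch yields $T_{DR}x=\{P_A(2z-x)-z+x\mid z\in P_B x\}$, which is exactly (\ref{eq:AAR2}). This step is purely bookkeeping; the only point to be careful about is that the $z\in P_B x$ appearing in $2z-x$ and in $-z$ must be the same element, which is automatic from the union formulation.

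For (i), the idea is that the identity $x_+=\tfrac12(\tilde x+x)$ makes the differences on the left-hand side of (\ref{eq:AAR1}) symmetric combinations of $x-y$ and $\tilde x-\tilde y$. Explicitly,
\[
x_+-y_+=\tfrac12\bigl((\tilde x-\tilde y)+(x-y)\bigr),\qquad (x-x_+)-(y-y_+)=\tfrac12\bigl((x-y)-(\tilde x-\tilde y)\bigr).
\]
Setting $a=\tfrac12(x-y)$ and $b=\tfrac12(\tilde x-\tilde y)$ and applying the parallelogram identity $\|a+b\|^2+\|a-b\|^2=2\|a\|^2+2\|b\|^2$ produces exactly the right-hand side $\tfrac12\|x-y\|^2+\tfrac12\|\tilde x-\tilde y\|^2$.

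I do not anticipate a genuine obstacle: no regularity, convexity or firmness of $A$ or $B$ is used, and both parts reduce to algebra once the set-valued composition is correctly interpreted. The one place a reader could slip is conflating different choices of $\tilde x$ or $z$ when writing ``$T_{DR}x$'' as a set; the write-up should therefore fix the selections $\tilde x,\tilde y$ (and correspondingly the $z\in P_B x$ in (ii)) at the outset and carry them consistently through both computations.
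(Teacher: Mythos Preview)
Your proposal is correct and follows essentially the same route as the paper: the paper also derives (ii) by writing $R_Bx=\{2z-x\mid z\in P_Bx\}$, applying $R_A=2P_A-\id$, and simplifying, and it obtains (i) by the same substitution $x_+=\tfrac12(\tilde x+x)$, $y_+=\tfrac12(\tilde y+y)$ followed by the expansion that amounts to the parallelogram law (the paper writes out the cross terms and cancels them rather than naming the identity). The only cosmetic difference is ordering; your explicit invocation of the parallelogram law is if anything a bit cleaner.
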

\begin{proof}
By Definition \ref{AAR} 
\begin{align} 
x_+\in~ &T_{DR}x\\
\Longleftrightarrow~\,~\,~\qquad x_+\in~&\tfrac12(R_AR_Bx+x)\\
\Longleftrightarrow\quad 2 x_+-x\in~ &R_A R_Bx.
\end{align} 
Defining $\tilde x=2x_+-x$ yields $x_+=\tfrac12(\tilde x+x)$, where $\tilde x\in R_AR_Bx$.
\begin{enumerate}[{(i)}]
 \item 
For $x_+\in T_{DR}x$ (respectively $y_+\in T_{DR}y$) choose $\tilde x\in R_A R_B x$ (respectively $\tilde y$) such that $x_+=(\tilde x+x)/2$ (respectively $y_+$).
Then
\begin{align} 
&\norm{x_+ - y_+}^2+\norm{\left(x - x_+\right)-\left(y -y_+\right)}^2\\
&\qquad=\norm{\frac 1 2 \tilde x+ \frac 1 2 x -\frac 1 2 \tilde y -\frac 1 2 y}^2+
\norm{ \frac 1 2 x-\frac 1 2 \tilde x-\frac 1 2 y +\frac 1 2 \tilde y }^2 \\
&\qquad=\frac 1 2\norm{x-y}^2+\frac 1 2 \norm{\tilde x- \tilde y}^2
+\frac 1 2\langle \tilde x - \tilde y, x-y\rangle-\frac 1 2 \langle \tilde x - \tilde y, x-y\rangle\\
&\qquad=\frac 1 2\norm{x-y}^2+\frac 1 2 \norm{ \tilde x- \tilde y}^2.
\end{align} 
\item This follows easily from the definitions.  Indeed, we 
represent $v\in R_Bx$ as $v=2z-x$ for $z\in P_B x$ so that 
\begin{align} 
 T_{DR}x&=\left\{\frac 1 2\left(R_A v +x\right)~\middle|~ v\in R_B x\right\} \\
&=\left\{\frac 1 2\left(R_A (2z-x) +x\right)~\middle|~ z\in P_B x\right\},\\
&=\left\{\frac 1 2\left(2P_A (2z-x)-(2z-x) +x\right)~\middle|~ z\in P_B x\right\}\\
&=\left\{ P_A (2z-x)-z +x~\middle|~ z\in P_B x\right\}.
\end{align} 
\end{enumerate}

\end{proof}
\begin{myremark}
In the case where  $A$ and $B$ are convex, then 
as a consequence of Lemma \ref{lemma:AAR} i) and the fact that the reflector $R_\set$ onto a convex set $\set$ is nonexpansive,
we recover the well-known fact that $T_{DR}$ is firmly nonexpansive and
\eqref{eq:AAR1} reduces to
 \begin{align}
&\norm{T_{DR}x-T_{DR}y}^2+\norm{\left(\id -T_{DR}\right)x-\left(\id -T_{DR}\right)y}^2\notag\\
&\qquad\qquad=\frac 1 2\norm{x-y}^2+\frac 1 2 \norm{R_A R_B x- R_A R_B y}^2,\label{eq:AAR1_cvx}
\end{align}
while \eqref{eq:AAR2} reduces to
\begin{align}
 T_{DR}x&=x+P_A R_B x - P_B x .\label{eq:AAR2_cvx}
\end{align}
\hfill$\Box$
\end{myremark}


We define next an analog to firm nonexpansiveness in the nonconvex case with respect to a set $S$.
\begin{mydef}[$(S,\epsilon)$-(firmly-)nonexpansive mappings]
Let $D$ and $S$ be nonempty subsets of $\E$ and let $T$ be a (multi-\-valued) mapping from $D$ to $\E$.
\begin{enumerate}
   \item[i)]  $T$ is called \emph{$(S,\varepsilon)$-nonexpansive on $D$} if 
\begin{align}\label{eq:epsqnonexp}
\begin{aligned}
&\norm{x_+-\point_+}\leq\sqrt{1+\varepsilon}\norm{x-\point}\\
&\forall x\in D,~\forall \point\in S,~\forall x_+\in Tx,~\forall \point_+\in T\point. 
\end{aligned}
\end{align}
If \ref{eq:epsqnonexp} holds with $\epsilon=0$ then we say that $T$ is \emph{$S$-nonexpansive} on $D$.
\item[ii)] $T$ is called \emph{$(S,\varepsilon)$-firmly nonexpansive} on $D$ if  
\begin{align}\label{eq:quasifirm}
\begin{aligned}
&\norm{x_+-\point_+}^2+\norm{(x -x_+)-(\point -\point_+)}^2\leq(1+\varepsilon)\norm{x-\point}^2\\
&\forall x\in D,~\forall \point\in S,~\forall x_+\in Tx,~\forall \point_+\in T\point.
\end{aligned}
\end{align}
If \ref{eq:quasifirm} holds with $\epsilon=0$ then we say that $T$ is \emph{$S$-firmly nonexpansive} on $D$.

\end{enumerate}
\end{mydef}

Note that, as with (firmly) nonexpansive mappings, the mapping $T$ need not be a self-mapping from $D$ to itself.
In the special case where $S=\Fix T$, mappings satisfying \eqref{eq:epsqnonexp} are also 
called {\em quasi-(firmly-)nonexpansive}  \cite{BauCom}.  Quasi-nonexpansiveness is a restriction of
another well-known concept,  Fej\'er monotonicity, to $\Fix T$.
Equation \eqref{eq:quasifirm} is a relaxed version of firm nonexpansiveness \eqref{eq:firmlynonexpansive}.
The aim of this work is to expand the theory for projection methods (and in particular MAP and Douglas-Rachford) 
to the setting where one (or more) of the sets are nonconvex.
The classical (firmly) nonexpansive operator on $D$ is $(D,0)$-(firmly) nonexpansive on $D$. 

Analogous to the relation between firmly nonexpansive mappings and averaged mappings 
(see \cite[Chapter 4]{BauCom} and references therein) we have the 
following relationship between $(S,\varepsilon)$-firmly nonexpansive mappings and 
their $1/2$-averaged companion mapping.  
\begin{mylemma}[$1/2-$averaged mappings]
\label{lemma:averaged}
Let $D, S\subset\E$ be nonempty and $T:D\rightrightarrows\E$. The following are equivalent
\begin{enumerate}[{(i)}]
 \item \label{lemma:averaged1}
 $T$ is $(S,\varepsilon)$-firmly nonexpansive on $D$. 
 \item \label{lemma:averaged2}
The mapping $\widetilde T:D\rightrightarrows\E$ given by
\begin{align}
\widetilde Tx:=(2Tx-x)\quad \forall x\in D 
\end{align}
 is $(S,2\varepsilon)$-nonexpansive on $D$, i.e. $T$ can be written as
 \begin{align}\label{eq:charquasifirm}
 T x =\frac 1 2\left(x +\widetilde Tx\right)\quad \forall x\in D.
\end{align}

\end{enumerate}
\end{mylemma}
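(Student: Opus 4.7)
The plan is to prove the equivalence by a direct algebraic computation that parallels the convex case (Lemma \ref{lemma:firmlynonexpansive}, (i) $\Leftrightarrow$ (ii)), but tracked carefully through the set-valued selections.

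First I would fix arbitrary $x\in D$, $\point\in S$, $x_+\in Tx$, $\point_+\in T\point$ and set $\tilde x := 2x_+ - x$ and $\tilde{\point} := 2\point_+ - \point$. By definition, $\tilde x \in \widetilde T x$ and $\tilde{\point} \in \widetilde T \point$, and conversely every element of $\widetilde T x$ (resp.\ $\widetilde T \point$) arises this way. Thus (\ref{eq:charquasifirm}) is immediate, and the quantifiers in (\ref{eq:epsqnonexp}) for $\widetilde T$ and in (\ref{eq:quasifirm}) for $T$ are in one-to-one correspondence.

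Next I would write $x_+ - \point_+ = \tfrac12(x-\point) + \tfrac12(\tilde x - \tilde \point)$ and $(x-x_+) - (\point - \point_+) = \tfrac12(x-\point) - \tfrac12(\tilde x - \tilde \point)$, and apply the parallelogram identity $\|a+b\|^2 + \|a-b\|^2 = 2\|a\|^2 + 2\|b\|^2$ to obtain
\begin{equation*}
\norm{x_+ - \point_+}^2 + \norm{(x-x_+)-(\point - \point_+)}^2 = \tfrac12 \norm{x-\point}^2 + \tfrac12 \norm{\tilde x - \tilde \point}^2.
\end{equation*}
This is the key identity, and it is in fact a specialization of the calculation already carried out in the proof of Lemma \ref{lemma:AAR}(i).

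Finally I would use this identity to translate between the two inequalities. The $(S,\varepsilon)$-firm nonexpansiveness bound (\ref{eq:quasifirm}) becomes
\begin{equation*}
\tfrac12 \norm{x-\point}^2 + \tfrac12 \norm{\tilde x - \tilde \point}^2 \leq (1+\varepsilon)\norm{x-\point}^2,
\end{equation*}
which rearranges to $\norm{\tilde x - \tilde \point}^2 \leq (1+2\varepsilon)\norm{x - \point}^2$, i.e.\ the $(S,2\varepsilon)$-nonexpansiveness bound (\ref{eq:epsqnonexp}) for $\widetilde T$. Since every step is an equivalence (the parallelogram identity is an equality, and the selection correspondence is a bijection on selections for fixed $x,\point$), both directions follow simultaneously. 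There is no real obstacle here; the only thing to be careful about is keeping the quantifiers over selections aligned via the bijection $x_+ \leftrightarrow \tilde x = 2x_+ - x$, so that an inequality valid for all selections on one side is valid for all selections on the other.
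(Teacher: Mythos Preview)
Your proposal is correct and essentially identical to the paper's proof: both establish the bijection $x_+\leftrightarrow \tilde x=2x_+-x$ on selections and then derive the key identity $\norm{x_+-\point_+}^2+\norm{(x-x_+)-(\point-\point_+)}^2=\tfrac12\norm{x-\point}^2+\tfrac12\norm{\tilde x-\tilde\point}^2$ (the paper by direct expansion, you by naming it as the parallelogram law), after which the equivalence is immediate.
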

\begin{proof}
For $x\in D$ choose $x_+\in Tx$.
Observe that, by the definition of $\widetilde{T}$,  there is a corresponding $\tilde x\in \widetilde Tx$ such that
$x_+=\frac 1 2(x+\tilde x)$, which is just formula \eqref{eq:charquasifirm}.
Let $z$ be any point in $S$ and select any $z_+\in Tz$.  Then 
\begin{align} 
& \norm{x_+ - z_+}^2+\norm{ x- x_+ -(z-z_+)}^2\\
&\qquad\qquad=\norm{\tfrac12(x+\tilde x)-\tfrac12(z+\tilde z)}^2 + \norm{\tfrac12(x-\tilde x)-\tfrac12(z-\tilde z )}^2\\
&\qquad\qquad=\tfrac14\left[\norm{x-z}^2+2\langle x-z,\tilde x-\tilde z\rangle +\norm{\tilde x-\tilde z}^2\right]\\
&\qquad\qquad\qquad+\tfrac14\left[\norm{x- z}^2-2\langle x- z,\tilde x-\tilde z\rangle +\norm{\tilde x-\tilde z}^2\right]\\
&\qquad\qquad=\tfrac 12 \norm{x- z}^2+\tfrac 12\norm{\tilde x-\tilde  z}^2\\
&\qquad\qquad\stackrel{!}\leq\tfrac12 \norm{x- z}^2+\tfrac12(1+2\varepsilon)\norm{x- z}^2\\
&\qquad\qquad=(1+\varepsilon)\norm{x- z}^2 
\end{align} 
where the inequality holds if and only if $\widetilde T$ is $(S, 2\varepsilon)-$nonexpansive.  By definition, it then 
holds that $T$ is $(S,\epsilon)$-firmly nonexpansive if and only if $\widetilde T$ is $(S, 2\varepsilon)-$nonexpansive, 
as claimed.
\end{proof}
We state the following theorem to suggest that the framework presented in this work 
can be extended to a more general setting, for example the adaptive framework discussed in \cite{BauBorSIREV}.
It shows that the $(S,\varepsilon)$-firm nonexpansiveness is preserved under convex combination of operators.

\begin{mythm}
\label{t:combination qfirm}
Let $T_1$ be ($S,\varepsilon_1$)-firmly nonexpansive and 
 $T_2$ be ($S,\varepsilon_2$)-firmly nonexpansive on $D$.
 The convex combination $\lambda T_1+(1-\lambda)T_2$ is ($S,\varepsilon$)-firmly nonexpansive on $D$ where $\varepsilon=\max\{\varepsilon_1,\varepsilon_2\}$. 
\end{mythm}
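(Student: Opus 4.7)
The plan is to unpack an arbitrary pair of images under the convex combination $T := \lambda T_1 + (1-\lambda) T_2$, and then reduce the inequality \eqref{eq:quasifirm} for $T$ to a convex combination of the inequalities that we already know hold for $T_1$ and $T_2$. The only tool needed beyond the definition is the convexity of the squared norm $\|\cdot\|^2$.

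Concretely, I would fix $x \in D$ and $\point \in S$ and select arbitrary images $x_+ \in Tx$ and $\point_+ \in T\point$. By the definition of the (multi-valued) convex combination, these can be written as
\[
x_+ = \lambda u + (1-\lambda) v, \qquad \point_+ = \lambda \bar u + (1-\lambda) \bar v,
\]
with $u \in T_1 x$, $v \in T_2 x$, $\bar u \in T_1 \point$, $\bar v \in T_2 \point$. Setting $a := u - \bar u$, $b := v - \bar v$, and $c := x - \point$, one has
\[
x_+ - \point_+ = \lambda a + (1-\lambda) b, \qquad (x - x_+) - (\point - \point_+) = \lambda(c - a) + (1-\lambda)(c - b).
\]

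Now I would apply convexity of $\|\cdot\|^2$ to each of these two vectors and then invoke the $(S,\varepsilon_i)$-firm nonexpansiveness of $T_i$ (which controls $\|a\|^2 + \|c-a\|^2$ and $\|b\|^2 + \|c-b\|^2$ respectively by $(1+\varepsilon_i)\|c\|^2$) to obtain
\[
\|x_+ - \point_+\|^2 + \|(x - x_+) - (\point - \point_+)\|^2 \le \lambda(1+\varepsilon_1)\|c\|^2 + (1-\lambda)(1+\varepsilon_2)\|c\|^2 \le (1+\varepsilon)\|x - \point\|^2,
\]
with $\varepsilon = \max\{\varepsilon_1,\varepsilon_2\}$, which is exactly \eqref{eq:quasifirm} for $T$ with constant $\varepsilon$.

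There is no real obstacle here; the argument is a direct convexity estimate. The only point worth being careful about is the multi-valued interpretation of $\lambda T_1 + (1-\lambda) T_2$: one must observe that every element of $Tx$ admits the decomposition $\lambda u + (1-\lambda)v$ with $u \in T_1 x, v \in T_2 x$ (and similarly at $\point$), so that the inequalities for $T_1$ and $T_2$ may be applied independently to the pairs $(u,\bar u)$ and $(v,\bar v)$. Once this is noted, the computation above yields the claim.
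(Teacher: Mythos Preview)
Your proof is correct, but it takes a different route than the paper's. The paper does not work directly with the defining inequality \eqref{eq:quasifirm}. Instead it invokes Lemma~\ref{lemma:averaged}: since each $T_i$ is $(S,\varepsilon_i)$-firmly nonexpansive, the mapping $\widetilde T_i = 2T_i-\id$ is $(S,2\varepsilon_i)$-nonexpansive; the convex combination of the $\widetilde T_i$ is then shown to be $(S,2\varepsilon)$-nonexpansive via the ordinary triangle inequality for the norm, and one more application of Lemma~\ref{lemma:averaged} translates this back to $(S,\varepsilon)$-firm nonexpansiveness of $\lambda T_1+(1-\lambda)T_2$. Your argument bypasses Lemma~\ref{lemma:averaged} entirely, applying convexity of $\|\cdot\|^2$ directly to the two terms in \eqref{eq:quasifirm}. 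This is arguably more elementary, and in fact it delivers the slightly sharper constant $\lambda\varepsilon_1+(1-\lambda)\varepsilon_2$ before you relax to $\max\{\varepsilon_1,\varepsilon_2\}$. The paper's approach, on the other hand, illustrates the utility of the averaged-mapping characterization and reduces the firmly-nonexpansive claim to a simpler nonexpansiveness statement.
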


\begin{proof}
Let $x,y\in D$. 
Let 
\begin{align} 
 x_+ \in& \lambda T_1x+\left(1-\lambda\right)T_2 x,\quad\textup{and}\\
 y_+ \in& \lambda T_1y+\left(1-\lambda\right)T_2 y,\\
\Rightarrow \quad x_+=& \lambda  x_+^{(1)} +(1-\lambda) x_+^{(2)},\quad\textup{where } x_+^{(1)}\in T_1 x,~ x_+^{(2)}\in T_2x\\ 
 y_+=& \lambda  y_+^{(1)} +(1-\lambda) y_+^{(2)},\quad\textup{where } y_+^{(1)}\in T_1 y,~ y_+^{(2)}\in T_2y.
\end{align} 
By Lemma \ref{lemma:averaged} (\ref{lemma:averaged2})  one has nonexpansiveness of the mappings given by $2T_1x-x$ and $2T_2x-x$, $x\in D$ that is
\begin{align} 
\norm{\left[ 2 x_+^{(1)}-x\right]-\left[2 y_+^{(1)}-y\right] }\leq& \sqrt{1+2\varepsilon_1}\norm{x-y},\\
\norm{\left[ 2 x_+^{(2)}-x\right]-\left[2 y_+^{(2)}-y\right] }\leq& \sqrt{1+2\varepsilon_2}\norm{x-y}.
\end{align} 
This implies
\begin{align} 
 &\norm{\left(2 x_+-x\right)-\left(2 y_+-y\right)}\\
=&\norm{\left(2\left[\lambda  x_+^{(1)} +(1-\lambda) x_+^{(2)}\right]-x\right)-\left(2\left[ \lambda y_+^{(1)} +(1-\lambda) y_+^{(2)}\right]-y\right)}\\
=&\norm{\lambda\left(\left[2  x_+^{(1)} -x\right]-\left[2  y_+^{(1)} -y\right]\right)-(1-\lambda)\left(\left[2  x_+^{(2)} -x\right]-\left[2  y_+^{(2)} -y\right]\right)}\\
\leq&\lambda\norm{\left[2  x_+^{(1)} -x\right]-\left[2  y_+^{(1)} -y\right]}+(1-\lambda)\norm{\left[2  x_+^{(2)} -x\right]-\left[2  y_+^{(2)} -y\right]}\\
\leq&\sqrt{1+2\varepsilon}\norm{x-y}.
\end{align} 
Now using Lemma \ref{lemma:averaged} (\ref{lemma:averaged1}) the proof is complete.
\end{proof}

\subsection{Regularity of Sets}
To assure property \eqref{eq:quasifirm} for the projector and the Douglas-Rachford operator, we 
determine the inheritance of the regularity of the projector and reflectors from the regularity of 
the sets  $A$ and $B$ upon which we project.  We begin with some established notions of 
set regularity and introduce a new, weaker form that will be central to our analysis. 
\begin{mydef}[Prox-regularity]\label{proxregularity}
A nonempty (locally) closed set $\set\subset\E$ is \emph{prox-regular} at a point $\point\in\set$ if the projector $P_\set$ is 
single-valued around $\point$. 
\end{mydef}
What we take as the definition of prox-regularity 
actually follows from the equivalence of prox-regularity of sets as defined in 
\cite[Definition 1.1]{PolRockThib00} and the single-valuedness of the projection
operator on neighborhoods of the set \cite[Theorem 1.3]{PolRockThib00}.

\begin{mydef}[normal cones]
The 
{\em proximal normal cone} $N^P_\set(\point)$ to a set $\set\subset\E$ at a point $\point\in\set$ is
defined by 
\begin{align}
N^P_\set(\point)\equiv \cone(P^{-1}_\set(\point)-\point). \label{eq:pnormal}
\end{align}
The the \emph{limiting normal cone} $ N_\set(\point)$ is defined as any vector that can be written as the limit 
of proximal normals; that is, $\overline v\in N_\set(\point)$ if and only if there exist sequences $(x_k)_{k\in\mathbb{N}}$ in $\set$ and 
$(v_k)_{k\in \mathbb{N}}$ in $N^P_\set(x_k)$ such that $x_k\to \point$ and $v_k\to \overline v$. 
\end{mydef}
The construction of the \emph{limiting normal cone} goes back to Mordukhovich (see \cite[Chap. 6 Commentary]{VA}).
\begin{myprop}[Mordukhovich]
The \emph{limiting normal cone} or \emph{Mordukhovich normal cone} is the smallest cone satisfying the two properties
\begin{enumerate}
 \item  $P^{-1}_\Omega(\point)\subseteq (I + N_\Omega)(\point)$ where $P_\Omega^{-1}(\point)$ is the preimage set of $\point$ under $P_\Omega$.
 \item for any sequence $x_i\to\point$ in $\set$ any limit of a sequence of normals $v_i\in N_\set(x_i)$ must lie in $ N_\set(\point)$.
\end{enumerate}
\end{myprop}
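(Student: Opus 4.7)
The plan is to prove this in two parts: first verify that $N_\set$ itself satisfies both listed properties, and then establish minimality by showing that any cone-valued mapping satisfying them must pointwise contain $N_\set$.

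For the verification, property (1) is essentially tautological. Given $y\in P^{-1}_\set(\point)$, the vector $y-\point$ lies in $N^P_\set(\point)$ (take $\lambda=1$ in the cone definition of the proximal normal cone), and the constant sequences $x_k=\point$, $v_k=y-\point$ exhibit it as a limit of proximal normals, so $y-\point\in N_\set(\point)$ and hence $y\in(I+N_\set)(\point)$. For property (2), I would carry out a standard diagonal extraction: given $x_i\to\point$ in $\set$ and $v_i\in N_\set(x_i)$ with $v_i\to\bar v$, each $v_i$ is by definition a limit of proximal normals $v_i^{(j)}\in N^P_\set(x_i^{(j)})$ with $x_i^{(j)}\to x_i$ as $j\to\infty$; selecting $j(i)$ large enough that $\norm{x_i^{(j(i))}-x_i}+\norm{v_i^{(j(i))}-v_i}<1/i$ produces proximal normals whose base points converge to $\point$ and whose values converge to $\bar v$, placing $\bar v\in N_\set(\point)$.

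For the minimality claim, let $K$ be any cone-valued mapping on $\set$ satisfying (1) and (2), and pick an arbitrary $\bar v\in N_\set(\point)$. Unpacking the definition of $N_\set$ gives $x_k\to\point$ in $\set$ and $v_k\in N^P_\set(x_k)$ with $v_k\to\bar v$, and by the definition of the proximal normal cone each $v_k=\lambda_k(y_k-x_k)$ for some $\lambda_k\geq 0$ and $y_k\in P^{-1}_\set(x_k)$. Property (1) of $K$ at the point $x_k$ then yields $y_k-x_k\in K(x_k)$; the cone property of $K(x_k)$ upgrades this to $v_k=\lambda_k(y_k-x_k)\in K(x_k)$; and property (2) of $K$ applied along $x_k\to\point$, $v_k\to\bar v$ finally forces $\bar v\in K(\point)$, giving $N_\set(\point)\subseteq K(\point)$.

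The only step requiring genuine care is the diagonal extraction used to close $N_\set$ under its own defining limit operation in property (2); the rest follows directly from the definition of $N^P_\set$ together with the cone property of $K(x_k)$, so no serious obstacle is anticipated.
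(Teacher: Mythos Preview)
The paper does not prove this proposition; it is stated as a known characterization due to Mordukhovich, with the preceding sentence referring the reader to the Chapter~6 commentary of Rockafellar--Wets for the construction. Your proposal therefore goes beyond the paper by supplying an actual argument, and the argument is sound: the diagonal extraction establishes outer semicontinuity of $N_\set$, and the minimality step correctly unwinds an arbitrary limiting normal into a sequence of proximal normals, pushes each into $K(x_k)$ via property~(1) together with the cone axiom, and then closes up via property~(2). One small point worth making explicit: with the paper's convention that a cone need only be closed under \emph{strictly positive} scaling, the step ``$v_k=\lambda_k(y_k-x_k)\in K(x_k)$'' requires separate treatment when $\lambda_k=0$; but this is immediate, since $x_k\in P^{-1}_\set(x_k)$ and property~(1) then force $0\in K(x_k)$ directly.
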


\begin{mydef}[{$(\varepsilon,\delta)$-(sub)regularity}]\label{epsilondeltasubregular}
$~$
\begin{enumerate}
   \item[i)] A nonempty set $\set\subset\E$ is \emph{($\varepsilon,\delta$)-subregular} at $\hat x$ with respect to $S\subset\E$ if
\begin{align}\label{eq:epsilondeltasubregularity}
\langle v_x,\point-x\rangle\leq\varepsilon\norm{v_x}\norm{\point-x}
\end{align}
holds for all $x\in   \Ball_\delta(\hat x)\cap\set,$ $\point\in S\cap   \Ball_\delta(\hat x),$ $v_x\in  N^P_\set(x)$. 
We simply say $\set$ is ($\varepsilon,\delta$)-subregular at $\hat x$ if $S=\{\hat x\}$.
\item[ii)]  If $S=\set$ in i) then we say that the set $\set$ is \emph{$(\varepsilon,\delta)$-regular} at $\hat x$.
\item[iii)] If for all $\epsilon>0$ there exists a $\delta>0$ such that \eqref{eq:epsilondeltasubregularity} holds 
for all $x,\point\in   \Ball_\delta(\point)\cap\set$ and $v_x\in  N_\set(x)$, then $\set$ is said to be 
{\em super-regular}. 
\end{enumerate}
\end{mydef}
The definition of ($\varepsilon,\delta$)-regularity was introduced in \cite[Definition 9.1]{BLPW1} and is a 
generalization of the notion of super-regularity introduced in \cite[Definition 4.3]{LLM}.
More details to $(\varepsilon,\delta)$-regularity can be seen in \cite{BLPW1}.  Of particular interest is the following
proposition.  Preparatory to this, we remind readers of another well-known type of regularity,  {\em Clarke regularity}. 
To avoid introducing the F\'echet normal which is conventionally used to define Clarke regularity, we follow  \cite{LLM} 
which uses proximal normals. 
\begin{mydef}[Clarke regularity]
 A nonempty (locally) closed set $\set\subset\E$ is \emph{Clarke regular} at a point $\point\in\set$ if, for all 
$\varepsilon>0$, any two points $u, ~z$ close enough to $\point$ with $z\in \set$, and any point $y\in P_\set(u)$, 
satisfy $\left\langle z-\point, u-y \right\rangle\leq \varepsilon\|z-\point\|\|u-y\|$. 
\end{mydef}

\begin{myprop}[Prox-regular implies super-regular,  Proposition 4.9 of \cite{LLM}]\label{def:clarke}
 If a closed set $\set\subset\E$ is prox-regular at a point in $\set$, then it is super-regular at that point.
 If a closed set $\set\subset\E$ is super-regular at a point in $\set$, then it is Clarke regular at that point.
 \end{myprop}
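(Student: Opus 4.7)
The plan is to handle the two implications separately, leaning on the equivalent characterization of prox-regularity as a local hypomonotonicity-type inequality for proximal normals. For the first implication, I would invoke \cite[Theorem 1.3]{PolRockThib00} to extract constants $r,s>0$ with
\begin{align}
\langle v,\,y-x\rangle\ \leq\ \tfrac{1}{2r}\norm{v}\norm{y-x}^2
\end{align}
for all $x,y\in\set\cap\Ball_s(\point)$ and all $v\in N^P_\set(x)$ of norm at most $s$. Given $\varepsilon>0$, I would pick $\delta\leq\min\{s,\,2r\varepsilon\}$; then $\norm{y-x}\leq 2\delta\leq 4r\varepsilon$, so the quadratic factor collapses and the inequality becomes $\langle v,y-x\rangle\leq \varepsilon\norm{v}\norm{y-x}$, which is exactly super-regularity for proximal normals. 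The passage to limiting normals $v\in N_\set(x)$ is routine: any such $v$ is the limit of proximal normals at points approaching $x$, and the inequality survives the limiting process.

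For the second implication, the starting observation is that if $y\in P_\set(u)$ then $u-y\in N^P_\set(y)\subseteq N_\set(y)$ by the very definition of the proximal normal cone, while $\norm{y-\point}\leq\norm{y-u}+\norm{u-\point}\leq 2\norm{u-\point}$ since $\point\in\set$. So by tightening the admissible neighborhood of $\point$ by a fixed constant factor, both $y$ and $z$ can be forced to lie in $\set\cap\Ball_\delta(\point)$, where $\delta$ is furnished by super-regularity. The plan is then to apply the super-regularity inequality at the base point $y$ with normal $v=u-y$ and the two test points $z$ and $\point$, decompose $z-\point=(z-y)+(y-\point)$, and combine the two resulting bounds to extract the target estimate.

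The first implication is routine once \cite{PolRockThib00} has been cited. The main obstacle will be in the second implication: the splitting $z-\point=(z-y)+(y-\point)$ generates a cross term $\langle u-y,\,y-\point\rangle$ on which super-regularity only delivers a one-sided bound in the wrong direction. This residual must be absorbed using $\norm{y-\point}\leq 2\norm{u-\point}$ together with a further shrinking of the neighborhood radius, which is precisely why the target inequality is phrased with ``close enough to $\point$'' rather than as a single uniform estimate on a fixed ball.
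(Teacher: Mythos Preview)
The paper does not prove this proposition; it merely records it as Proposition~4.9 of \cite{LLM}. Your first implication is fine apart from a harmless constant in the choice of $\delta$ (as written you obtain $2\varepsilon$ rather than $\varepsilon$; take $\delta\leq\min\{s,r\varepsilon\}$ instead).

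Your plan for the second implication, however, does not go through. You correctly flag that the splitting $z-\point=(z-y)+(y-\point)$ leaves the cross term $\langle u-y,\,y-\point\rangle$, and that super-regularity (applied at base point $y$ with test point $\point$) only yields the lower bound $\langle u-y,\,y-\point\rangle\geq -\varepsilon'\norm{u-y}\norm{y-\point}$. The only upper bound available on this cross term is Cauchy--Schwarz, $\langle u-y,\,y-\point\rangle\leq\norm{u-y}\norm{y-\point}$, and this cannot be absorbed into the target $\varepsilon\norm{u-y}\norm{z-\point}$ by shrinking the neighborhood: the point $z$ may be taken arbitrarily close to $\point$ \emph{independently} of $u$ (and hence of $y$), so you would need $\norm{y-\point}\lesssim\varepsilon\norm{z-\point}$, which is impossible to arrange uniformly. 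The estimate $\norm{y-\point}\leq 2\norm{u-\point}$ that you invoke controls $\norm{y-\point}$ only in terms of $\norm{u-\point}$, not in terms of $\norm{z-\point}$, so the proposed absorption fails.

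A route that does work is a compactness argument. If the Clarke inequality failed, there would be $\varepsilon_0>0$ and sequences $u_k,z_k\to\point$ with $z_k\in\set$, $y_k\in P_\set(u_k)$, and $\langle u_k-y_k,\,z_k-\point\rangle>\varepsilon_0\norm{u_k-y_k}\norm{z_k-\point}$. Normalize and pass to subsequential limits $v=\lim\tfrac{u_k-y_k}{\norm{u_k-y_k}}\in N_\set(\point)$ and $w=\lim\tfrac{z_k-\point}{\norm{z_k-\point}}$, obtaining $\langle v,w\rangle\geq\varepsilon_0>0$. Now apply super-regularity at the base point $\point$ itself with test points $z_k$: since $v\in N_\set(\point)$, for every $\varepsilon'>0$ eventually $\langle v,\,z_k-\point\rangle\leq\varepsilon'\norm{z_k-\point}$, whence $\langle v,w\rangle\leq 0$, a contradiction. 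The key idea your decomposition misses is that the normal must be transported to $\point$ via a limit before the inequality with vector $z-\point$ can be invoked.
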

Super-regularity is something between Clarke regularity and amenability or prox\-reg\-ul\-ar\-ity.   
 $(\epsilon,\delta)$\--regular\-ity is weaker still than Clarke regularity (and hence super-regularity)
as the next example shows. 

\begin{myremark}
 $(\epsilon,\delta)$-regularity does not imply Clarke regularity
\begin{proof}
 \begin{equation}
  \set:=\left\{(x_1,x_2)\in\R^2~\middle|~
  \begin{matrix}
   x_2\leq -x_1&\textup{if }~ x_1\leq 0\\ x_2\leq0&\textup{if }~x_1>0
  \end{matrix}
\right\}.
 \end{equation}
For any $x_-\in \partial_-\set:=\left\{(x_1,x_2)~|~ x_2=-x_1,~x_1< 0\right\}$ and
$x_+\in \partial_+\set:=\left\{(x_1,0)~|~ x_1> 0\right\}$ one has
\begin{align} 
N_\set(x_-)=&\left\{(\lambda,\lambda)~\middle|~ \lambda\in \R^+\right\}\\
N_\set(x_+)=&\left\{(0,\lambda)~|~ \lambda\in \R^+\right\}
\end{align} 
which implies $N_\set(0)=N_\set(x_-)\cup N_\set(x_+)$. Since $N^P_\set(0)=\left\{0\right\}$ the set $\set$ is not Clarke regular at $0$.
Define $\nu_-=(\sqrt{2}/2,\sqrt{2}/2)\in N_\set(x_-)$, $\nu_+=(0,1)\in N_\set(x_+)$ and note 
\begin{align} 
\langle \nu_-,0-x_-\rangle=0\quad\textup{and}\quad
\langle \nu_+,0-x_+\rangle=0
\end{align} 
to show that $\set$ is  $(0,\infty)-$subregular at $0$.
By the use of these two inequalities one now has
\begin{align} 
\langle \nu_-,x_+-x_-\rangle=\langle \nu_-,x_+\rangle\leq\sqrt{2}/2\norm{x_+}\\
\langle \nu_+,x_- - x_+\rangle=\langle \nu_+,x_-\rangle\leq\sqrt{2}/2\norm{x_-}
\end{align} 
and hence $\set$ is $(\tfrac{\sqrt{2}}2,\infty)$-regular.
\end{proof}
\end{myremark}

\begin{myremark}[Example \ref{examples} \ref{ex:cross}) revisited]\label{remark:clarke}
The set 
\begin{equation}
  A:=\R\times \{0\}\cup\{0\}\times\R
 \end{equation}
is a particularly easy pathological set that illustrates the distinction between our new notion of 
subregularity and previous notions found in the literature.
Note that for $x_1\in \R\times \{0\}$, $N_A(x_1)=N_A^P(x_1)= \{0\}\times\R$ and for
for $x_2\in \{0\}\times\R$, $N_A(x_2)=N_A^P(x_2)= \R\times\{0\}$ and that $N_A(0)=A$ and $N_A^P(0)=0$ which 
implies that at the origin $A$ is not Clarke regular and therefore neither super-regular nor prox-regular there.  In fact, it is not 
even $(\epsilon,\delta)$-regular at the origin for any $\epsilon<1$ and any $\delta>0$.
The set $A$ is, however, $(0,\infty)$-\emph{sub}regular at $\{0\}$. Indeed, for any $x_1\in \R\times \{0\}$ one has 
$\nu_1\in N_A(x_1)= \{0\}\times\R$ and therefore $\langle \nu_1,x_1-0\rangle=0$. 
Analogously for $x_2\in \{0\}\times\R$, $\nu_2\in N_A(x_2)= \R\times\{0\}$ and it follows that $\langle \nu_2,x_2-0\rangle=0$, 
which shows that $A$ is $(0,\infty)$-subregular at $0$.
\hfill$\Box$
\end{myremark}

\subsection{Projectors and Reflectors}
We show in this section how $(S,\varepsilon)$(firm)-nonexpansiveness of projectors and reflectors is 
a consequence of (sub)regularity of the underlying sets. 

\begin{mythm}[projectors and reflectors onto $(\varepsilon,\delta$)-subregular sets]
\label{t:subreg proj-ref}
Let $\set\subset\E$ be nonempty closed and $(\varepsilon,\delta$)\--sub\-reg\-ul\-ar at $\hat x$ 
with respect to $S\subseteq \Omega\cap \Ball_\delta(\hat x)$ and define
\begin{align}\label{eq:UPsubreg}
U\equiv\left\{ x\in \E ~\middle|~ P_\set x\subset \Ball_\delta(\hat x)\right\}. 
\end{align}
\begin{enumerate}[{(i)}]
\item \label{t:subreg proj-ref1}
The projector $P_\set$ is $(S,\relconst_1)$-nonexpansive on $U$, that is,\\
($\forall x\in  U$) ($\forall x_+\in P_\set x$) ($\forall \point\in S$) 
\begin{align}\label{eq:P:delta}
\begin{aligned}
\norm{x_+ -\point}\leq \sqrt{1+\relconst_1}\norm{x-\point}
\end{aligned}
\end{align}
where $\relconst_1\equiv 2\varepsilon+\varepsilon^2$.

   \item \label{t:subreg proj-ref2} 
The projector $P_\set$ is $(S,\relconst_2)$-firmly nonexpansive on $ \Ball_\delta(\hat x)$, that is,
($\forall x\in  U$) ($\forall x_+\in P_\set x$) ($\forall \point\in S$)
\begin{align}\label{eq:P:fna}
\begin{aligned}
\norm{x_+ -\point}^2+\norm{x-x_+ }^2\leq (1+\relconst_2)\norm{x-\point}^2, 
\end{aligned}
\end{align}
where $\relconst_2\equiv2\varepsilon+2\varepsilon^2.$
\item \label{t:subreg proj-ref3} 
The reflector $R_\set$ 
is $(S,\relconst_3)$-nonexpansive on $ \Ball_\delta(\hat x)$, that is,\\
($\forall x\in  U$) ($\forall x_+\in R_\set x$) ($\forall \point\in S$)
\begin{align}\label{eq:R:nonexa}
\begin{aligned}
\norm{x_+-\point}\leq\sqrt{1+\relconst_3} \norm{x-\point}, 
\end{aligned}
\end{align}
where $\relconst_3\equiv 4\varepsilon+4\varepsilon^2$.
\end{enumerate}
 \end{mythm}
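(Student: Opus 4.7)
The key driver for all three parts is the observation that for any projection $x_+\in P_\set(x)$, we have $x-x_+\in N^P_\set(x_+)$ directly from the definition of the proximal normal cone. Since $x\in U$ forces $x_+\in\Ball_\delta(\hat x)$, while $\point\in S\subseteq\Ball_\delta(\hat x)$ by hypothesis, the $(\varepsilon,\delta)$-subregularity of $\set$ at $\hat x$ with respect to $S$ gives
\begin{equation*}
\langle x-x_+,\point-x_+\rangle\leq\varepsilon\norm{x-x_+}\norm{\point-x_+}.
\end{equation*}
Feeding this into the polarization identity $\norm{x_+-\point}^2=\norm{x-\point}^2+2\langle x-x_+,\point-x_+\rangle-\norm{x-x_+}^2$ yields the master inequality
\begin{equation*}
\norm{x_+-\point}^2+\norm{x-x_+}^2\leq\norm{x-\point}^2+2\varepsilon\norm{x-x_+}\norm{\point-x_+},\qquad(\star)
\end{equation*}
which is the single tool I would use to derive (i)--(iii).

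For (i), abbreviate $u:=\norm{x-x_+}$, $v:=\norm{x_+-\point}$ and $w:=\norm{x-\point}$; completing the square in $(\star)$ rewrites it as $(v-\varepsilon u)^2+(1-\varepsilon^2)u^2\leq w^2$, so (assuming $\varepsilon\leq 1$, which costs nothing since subregularity is vacuous otherwise) I get $v\leq\varepsilon u+w$. The defining property of the projector together with $\point\in\set$ gives $u\leq w$, hence $v\leq(1+\varepsilon)w$; squaring produces the bound with $\relconst_1=2\varepsilon+\varepsilon^2$. For (ii) I would plug the linear estimate $v\leq(1+\varepsilon)w$ from (i) and the projection estimate $u\leq w$ back into the cross term of $(\star)$, obtaining $v^2+u^2\leq w^2+2\varepsilon(1+\varepsilon)w^2=(1+\relconst_2)w^2$ with $\relconst_2=2\varepsilon+2\varepsilon^2$. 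For (iii) I would take $r=2x_+-x\in R_\set(x)$, write $r-\point=(x_+-x)+(x_+-\point)$ and expand to
\begin{equation*}
\norm{r-\point}^2=\norm{x-x_+}^2+2\langle x-x_+,\point-x_+\rangle+\norm{x_+-\point}^2,
\end{equation*}
then bound the cross term by the subregularity inequality exactly as above and apply (ii) to the two square terms; the two contributions sum to $(1+2\varepsilon+2\varepsilon^2)w^2+(2\varepsilon+2\varepsilon^2)w^2$, giving $\relconst_3=4\varepsilon+4\varepsilon^2$.

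The only real obstacle is squeezing the sharp constant out of (i). A casual Young's inequality on the cross term of $(\star)$ delivers only $\norm{x_+-\point}^2\leq(1-\varepsilon^2)^{-1}\norm{x-\point}^2$, and $1/(1-\varepsilon^2)$ exceeds $(1+\varepsilon)^2$ once $\varepsilon$ is large enough; the geometric completion $(v-\varepsilon u)^2+(1-\varepsilon^2)u^2\leq w^2$, combined with the projection inequality $u\leq w$ (which crucially uses $\point\in S\subseteq\set$), is what reduces the constant to $(1+\varepsilon)^2$ and in turn makes the constants for (ii) and (iii) come out in the stated clean form.
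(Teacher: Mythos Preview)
Your proof is correct and follows essentially the same route as the paper: both derive the master inequality $(\star)$ (the paper arrives at it in the proof of (ii)) and bootstrap (ii) from (i) via the projection bound $\norm{x-x_+}\leq\norm{x-\point}$. The only tactical differences are that for (i) the paper writes $\norm{x_+-\point}^2=\langle x-\point,x_+-\point\rangle+\langle x_+-x,x_+-\point\rangle$, applies Cauchy--Schwarz and subregularity, and divides through---thereby avoiding your completion of the square and the $\varepsilon\leq 1$ caveat---and for (iii) the paper simply invokes Lemma~\ref{lemma:averaged} (the $1/2$-averaged-mapping lemma) rather than expanding $\norm{r-\point}^2$ by hand.
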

\begin{proof}
\begin{enumerate}[{(i)}]
\item  The projector is nonempty since $\Omega$ is closed.  Then  by the 
Cauchy-Schwarz inequality
 \begin{align} 
\norm{x_+ -\point}^2
=& \langle x-\point,x_+ -\point\rangle+\langle x_+ -x,x_+ -\point\rangle\\
\leq& \norm{x-\point}\norm{x_+ -\point}+\langle x_+ -x,x_+ -\point\rangle.\label{e:inter1}\\
\intertext{Now for $x\in U$ we have also that $x_+ \in \Ball_\delta(\hat x)$ and thus, by 
the  definition of $(\varepsilon,\delta)$-subregularity with respect to $S$, 
($\forall x\in  U$)~($\forall  x_+ \in P_\set x$)~($\forall \point\in S$)   
}
\langle x_+ -x,x_+ -\point\rangle\leq& \varepsilon \norm{x-x_+ }\norm{x_+ -\point}\\
\leq &\varepsilon\norm{x-\point}\norm{x_+ -\point}.
\intertext{Combining this with \eqref{e:inter1}  yields ($\forall x\in  U$)~($\forall  x_+ \in P_\set x$)~($\forall \point\in S$)   }
\norm{x_+ -\point}\leq& (1+\varepsilon)\norm{x-\point}\\ 
=&\sqrt{1+(2\varepsilon+\varepsilon^2)}\norm{x-\point}\\
\end{align} 
 as claimed. 
\item  Expanding and rearranging the norm yields
($\forall x\in  U$)~($\forall  x_+ \in P_\set x$)~($\forall \point\in S$)   
\begin{align}
&\!\!\!\!\!\!\norm{x_+ -\point}^2+\norm{x-x_+ }^2\notag\\
&=~\norm{x_+ -\point}^2+\norm{x-\point+\point -x_+ }^2\notag\\
&=~\norm{x_+ -\point}^2+\norm{x-\point}^2+2\langle x-\point,\point-x_+ \rangle+\norm{x_+ -\point}^2\notag\\
&=2\norm{x_+ -\point}^2+\norm{x-\point}^2+2\underbrace{\langle x_+ -\point,\point-x_+ \rangle}_{=-\norm{x_+ -\point}^2}
+2\underbrace{\langle x-x_+ ,\point-x_+  \rangle}_{\leq\varepsilon\norm{x-x_+ }\norm{x_+ -\point}}\notag\\
&\leq~ \norm{x-\point}^2+2\varepsilon\norm{x_+ -\point}\norm{x-x_+ }\label{eq:P:prf1}
\end{align}
where the last inequality follows from the definition of $(\varepsilon,\delta$)-subregularity 
with respect to $S$. 
By definition, $\norm{x-x_+ }=\dist {x} {\set}\leq \norm{x-\point}$. 
 Combining \eqref{eq:P:prf1} and equation \eqref{eq:P:delta} yields 
($\forall x\in  U$)~($\forall  x_+ \in P_\set x$)~($\forall \point\in S$)   
\begin{align} 
\norm{x_+ -\point}^2+\norm{x-x_+ }^2
\leq
&\left(1+2\varepsilon\left(1+\varepsilon\right)\right)\norm{x-\point}^2.
\end{align} 
\item By (\ref{t:subreg proj-ref2}) the projector is $(S,2\varepsilon +2\varepsilon^2)$-firmly nonexpansive on $U$,
and so by Lemma \ref{lemma:averaged} (\ref{lemma:averaged2}) $R_\set=2 P_\set -\textup{Id}$ is 
$(S,4\varepsilon +4\varepsilon^2)$-nonexpansive on $U$.
\end{enumerate}
This completes the proof.
\end{proof}
Note that $\relconst_1<\relconst_2$ ($\varepsilon>0$) in the above theorem, in other words, the 
{\em degree} to which classical firm nonexpansiveness is violated is greater than the degree to which 
classical nonexpansiveness is violated.  This is as one would expect since firm nonexpansiveness is a 
stronger property than nonexpansiveness.   

%

We can now characterize the degree to which the Douglas-Rachford operator violates firm\--non\-expansive\-ness 
on neighborhoods of  $(\varepsilon,\delta)$-subregular sets.  
\begin{mythm}[$(S,\tilde\varepsilon)$-firm nonexpansiveness of $T_{DR}$]\label{t:Velvet Underground}
 Let $A,B\subset\E$ be closed and nonempty. Let $A$ and $B$ be $(\varepsilon_A,\delta)$- and 
$(\varepsilon_B,\delta)$\--sub\-regular respectively  at $\hat x$ with respect to $S\subset\Ball_\delta(\hat x)\cap \left( A\cap B\right)$.
Let $T_{DR}:\E\rightrightarrows\E$ be the Douglas-Rachford operator defined by \eqref{eq:AAR} and define
\begin{align}\label{eq:UAARsubreg}
U\equiv\left\{ z\in \E~|~P_Bz\subset \Ball_\delta(\hat x)\mbox{ and }P_AR_Bz\subset \Ball_\delta(\hat x)\right\}.
\end{align}
Then $T_{DR}$ is $(S,\tilde\varepsilon)$-firmly nonexpansive on $ U$
where
\begin{equation}\label{eq:eps DR}
\relconst=2\varepsilon_A(1+\varepsilon_A)+2\varepsilon_B(1+\varepsilon_B)+8\varepsilon_A(1+\varepsilon_A)\varepsilon_B(1+\varepsilon_B).
\end{equation}
That is, ($\forall x\in  U$)~($\forall  x_+ \in T_{DR} x$)~($\forall \point\in S$)   
\begin{align}
\begin{aligned}\label{eq:DR:fna}
\norm{x_+ -\point}^2+\norm{x -x_+ }^2\leq&\left(1+\relconst\right)\norm{x-\point}^2.
\end{aligned}
\end{align}
\end{mythm}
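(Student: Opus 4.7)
My plan is to apply Lemma \ref{lemma:AAR}(\ref{lemma:AAR1}) with the second argument chosen from the intersection $S\subset A\cap B$, and then reduce the proof to estimating the single quantity $\norm{\tilde x-\point}$ by viewing $R_AR_B$ as a composition of two reflectors, each of whose $(S,\cdot)$-nonexpansive moduli are already provided by Theorem \ref{t:subreg proj-ref}(\ref{t:subreg proj-ref3}).

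Concretely, fix $x\in U$, $x_+\in T_{DR}x$, and $\point\in S$. Since $\point\in A\cap B$, we have $\point\in P_B\point$ and $\point\in P_A\point$, so $\point\in R_AR_B\point$ and $\point\in T_{DR}\point$; thus I may take $\point_+\equiv\point$ together with $\tilde{\point}\equiv\point$. Lemma \ref{lemma:AAR}(\ref{lemma:AAR1}) supplies a matching $\tilde x\in R_AR_B x$ with $x_+=\tfrac12(x+\tilde x)$, and the identity \eqref{eq:AAR1} collapses to
\begin{align}
\norm{x_+-\point}^2+\norm{x-x_+}^2=\tfrac12\norm{x-\point}^2+\tfrac12\norm{\tilde x-\point}^2.
\end{align}
So the only task is to bound $\norm{\tilde x-\point}^2$ by a multiple of $\norm{x-\point}^2$.

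For this, the first condition $P_Bx\subset\Ball_\delta(\hat x)$ in the definition \eqref{eq:UAARsubreg} of $U$ lets me invoke Theorem \ref{t:subreg proj-ref}(\ref{t:subreg proj-ref3}) for the set $B$ to obtain
\begin{align}
\norm{R_Bx-\point}\leq\sqrt{1+4\varepsilon_B(1+\varepsilon_B)}\,\norm{x-\point},
\end{align}
while the second condition $P_AR_Bx\subset\Ball_\delta(\hat x)$ lets me apply the same theorem to $A$ at the point $R_Bx$, yielding
\begin{align}
\norm{\tilde x-\point}\leq\sqrt{1+4\varepsilon_A(1+\varepsilon_A)}\,\norm{R_Bx-\point}.
\end{align}
Squaring and multiplying the two inequalities gives $\norm{\tilde x-\point}^2\leq(1+\eta_A)(1+\eta_B)\norm{x-\point}^2$ with $\eta_A\equiv4\varepsilon_A(1+\varepsilon_A)$ and $\eta_B\equiv4\varepsilon_B(1+\varepsilon_B)$. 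Substituting this into the displayed identity and simplifying $\tfrac12+\tfrac12(1+\eta_A+\eta_B+\eta_A\eta_B)=1+\relconst$ reproduces exactly the constant \eqref{eq:eps DR}.

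The only genuine subtlety, and what I expect will be the main thing to handle carefully, is the multi-valued bookkeeping: I must argue that the reflector estimates of Theorem \ref{t:subreg proj-ref}(\ref{t:subreg proj-ref3}) apply to the specific selection $\tilde x\in R_AR_Bx$ produced by Lemma \ref{lemma:AAR}, and not merely to some element of $R_AR_Bx$. This is immediate from the quantifier structure of $(S,\varepsilon)$-nonexpansiveness, which ranges over \emph{every} selection, but it should be spelled out explicitly. Everything else is routine algebraic manipulation.
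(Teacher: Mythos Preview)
Your proposal is correct and follows essentially the same route as the paper: both arguments reduce the claim to showing that $R_AR_B$ is $(S,2\relconst)$-nonexpansive on $U$ by composing the reflector estimates of Theorem~\ref{t:subreg proj-ref}(\ref{t:subreg proj-ref3}) for $B$ and then for $A$, with the membership conditions in \eqref{eq:UAARsubreg} supplying exactly the hypotheses needed at each stage. The only cosmetic difference is that you pass from this to firm nonexpansiveness of $T_{DR}$ via the identity of Lemma~\ref{lemma:AAR}(\ref{lemma:AAR1}), whereas the paper cites the equivalent Lemma~\ref{lemma:averaged}; the multi-valued selection issue you flag is real but, as you note, is handled by the universal quantifiers in the definition of $(S,\varepsilon)$-nonexpansiveness together with the fact that $P_AR_Bx\subset\Ball_\delta(\hat x)$ means $P_Ay\subset\Ball_\delta(\hat x)$ for \emph{every} $y\in R_Bx$.
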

\begin{proof}
Define  
$U_A\equiv\{z~|~P_Az\subset \Ball_\delta(\hat x)\}$. By Theorem 
\ref{t:subreg proj-ref}(\ref{t:subreg proj-ref3}) 
($\forall y\in  U_A$)~($\forall  \tilde x \in R_A y$)~($\forall \point\in S$) 
\begin{align} 
 \norm{\tilde x-\point}\leq&\sqrt{1+4\varepsilon_A(1+\varepsilon_A)}\norm{y-\point}.\label{e:TDR1}\\
\intertext{Similarly, define $U_B\equiv\{z~|~P_Bz\subset \Ball_\delta(\hat x)\}$ and again apply 
Theorem \ref{t:subreg proj-ref}(\ref{t:subreg proj-ref3}) to get 
($\forall x\in  U_B$)~($\forall  y \in R_B x$)~($\forall \point\in S$) 
 } 
\norm{y-\point}\leq&\sqrt{1+4\varepsilon_B(1+\varepsilon_B)}\norm{x-\point}.\label{e:TDR2}
\intertext{Now, we choose any $x\in U_B$ such that $R_Bx\in U_A$, that is 
$x\in U$,
so that we can combine  \eqref{e:TDR1}-\ref{e:TDR2} to get 
($\forall x\in  U$)~($\forall  \tilde x \in R_A R_B x$)~($\forall \point\in S$) 
}
 \norm{\tilde x-\point}\leq&\sqrt{1+4\varepsilon_A(1+\varepsilon_A)}\sqrt{1+4\varepsilon_B(1+\varepsilon_B)}
\norm{x-\point} = 
\sqrt{1+2\relconst}\norm{x-\point}.\label{e:TDR3}
\end{align} 
Note that $R_AR_B\point=R_B\point=\point$ since $\point\in A\cap B$, 
so \eqref{e:TDR3} says that the operator  $\widetilde T\equiv R_AR_B$
is $(S,\relconst)$-nonexpansive on $U$.  Hence by  
Lemma \ref{lemma:averaged} $T_{DR} = \tfrac12\left(\widetilde T + I \right)$ is 
$(S,2\relconst)$-firmly nonexpansive on $U$, as claimed.
\end{proof}

If one of the sets above is convex, say $B$ for instance, the constant $\relconst$ simplifies to $\relconst=2\varepsilon_A(1+\varepsilon_A)$ since
$B$ is $(0,\infty)$-subregular at $\point$. 


\section{Linear Convergence of Iterated $(S,\varepsilon)$-firmly nonexpansive Operators}\label{s:convergence}
Our goal in this section is to establish the weakest conditions we can (at the moment) 
under which the MAP and Douglas-Rachford algorithms
converge locally linearly.  The notions of regularity developed in the previous section 
are necessary, but not sufficient.  In addition to regularity of the operators, we need 
regularity of the fixed point sets of the operators. This is developed next.  

Despite its simplicity, the following Lemma is one of our fundamental tools.
\begin{mylemma}\label{Tconv}
Let $D\subset\E$, $S\subset\Fix T$, ${T}:{D}\rightrightarrows{\E}$ and $U\subset D$.
If 
\begin{enumerate}
   \item[(a)] $T$ is  $(S,\varepsilon)$-firmly nonexpansive on $U$ and 
\item[(b)] for some $\lambda>0$, T satisfies the  \emph{coercivity condition}
\begin{align}\label{eq:Tcoerciv}
\norm{x-x_+}\geq \lambda \,\dist{x}{S}\quad
\forall~x_+\in Tx, ~\forall x\in U.
\end{align}
\end{enumerate}
Then 
\begin{align}\label{eq:Tconv}
\dist{x_+}{S}\leq&\sqrt{(1+\varepsilon-\lambda^2)}~\dist{x}{S}
\quad
\forall~x_+\in Tx, ~\forall x\in U.
\end{align}
\end{mylemma}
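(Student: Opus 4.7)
The plan is to combine the two hypotheses directly via the definition of $(S,\varepsilon)$-firm nonexpansiveness, isolating $\norm{x_+-\point}^2$ on one side and exploiting that points in $S$ are fixed by $T$ so the corresponding displacement term vanishes.

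First I would fix an arbitrary $x\in U$, an arbitrary $x_+\in Tx$, and an arbitrary $\point\in S$. Since $S\subset\Fix T$, we have $\point\in T\point$, so the admissible choice $\point_+=\point$ reduces the $(S,\varepsilon)$-firm nonexpansiveness inequality \eqref{eq:quasifirm} to
\begin{align}
\norm{x_+-\point}^2+\norm{x-x_+}^2\leq(1+\varepsilon)\norm{x-\point}^2.
\end{align}
This collapses the $(\id-T)$-term on the $\point$-side cleanly, which is the reason the coercivity bound will be able to act on a pure $\norm{x-x_+}^2$ quantity.

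Next I would invoke the coercivity hypothesis \eqref{eq:Tcoerciv} to replace $\norm{x-x_+}^2$ by the lower bound $\lambda^2\,\dist{x}{S}^2$, obtaining
\begin{align}
\norm{x_+-\point}^2\leq(1+\varepsilon)\norm{x-\point}^2-\lambda^2\,\dist{x}{S}^2.
\end{align}
Since the right-hand side's second term is independent of $\point$, I would then take the infimum over $\point\in S$ on the right, yielding $(1+\varepsilon-\lambda^2)\,\dist{x}{S}^2$. Because $\dist{x_+}{S}^2\leq\norm{x_+-\point}^2$ for every $\point\in S$, passing to the infimum on the left gives $\dist{x_+}{S}^2\leq(1+\varepsilon-\lambda^2)\,\dist{x}{S}^2$, and taking square roots yields the claim.

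There is essentially no hard step here; the only subtlety is the order in which one passes to the infimum, and the observation that the definition of $(S,\varepsilon)$-firm nonexpansiveness quantifies over \emph{all} $\point_+\in T\point$, so one is free to choose $\point_+=\point$ when $\point\in\Fix T$. One might also note that the conclusion is only informative when $\lambda^2<1+\varepsilon$, but nothing in the argument requires this; the bound holds vacuously otherwise.
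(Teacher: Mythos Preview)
Your proof is correct and follows essentially the same route as the paper's: the paper simply fixes $\point:=P_S x$ at the outset rather than working with an arbitrary $\point\in S$ and passing to the infimum, but the substance---using $\point_+=\point$ from $S\subset\Fix T$ to collapse the displacement term, then inserting the coercivity lower bound---is identical.
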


\begin{proof}
For $x\in U$ choose any $x_+\in Tx$, and define $\point:= P_S x$.  
Combining equations \eqref{eq:Tcoerciv} and \eqref{eq:quasifirm} yields
  \begin{align} 
 \norm{x_+-\point}^2+\left(\lambda \norm{x-\point}\right)^2\stackrel{b)}\leq&\\
 \norm{x_+-\point}^2+\norm{x -x_+}^2\stackrel{{a)}}\leq&{(1+\varepsilon)}\norm{x-\point}^2,
 \end{align} 
which immediately yields 
  \begin{align}\label{eq:Tconfproof1}
 \norm{x_+-\point}^2\leq&{(1+\varepsilon-\lambda^2)}\norm{x-\point}^2.
 \end{align}
Since $\point\in S$ by definition one has $\dist{x_+}{S }\leq \norm{x_+-\point}$. Inserting this in \eqref{eq:Tconfproof1} 
and using the fact $\norm{x-\point}=\dist x S$ then proves \eqref{eq:Tconv}.
\end{proof}


\subsection{Regularity of Intersections of Collections of Sets}
To this point, we have shown how the regularity of sets translates to the 
degree of violation of (firm) nonexpansiveness of projection-based fixed point mappings.  
What remains is to develop sufficient conditions for guaranteeing \eqref{eq:Tcoerciv}.
For this we define a new notion of regularity of collections of sets which generalizes through localization two 
well-known concepts.  
The first concept, which we call \emph{strong regularity} of the collection, has many different names in the literature, among them 
{\em linear regularity} \cite{LLM}.  We will use the term {\em linear regularity} of the collection to denote the second key concept 
upon which we build.  Our generalization is called {\em local linear regularity}.  
Both terms ``strong'' and ``linear'' are overused in the literature but we have attempted, at 
the risk of some confusion, to conform to the usage that best indicates the heritage of the ideas. 

\begin{mydef}[strong regularity, Kruger \cite{Kruger2004}]\label{strongregular}
A collection of $m$ closed, nonempty sets $\set_1,\set_2,\dots,\set_m$ is \emph{strongly regular at $\point$}
if there exists an $\alpha>0$ and a $\delta>0$ such that
\begin{align}
\left(\cap_{i=1}^m (\set_i-\omega_i-a_i)\right)\cap \Ball_\rho\neq\emptyset\label{eq:strongregular}
\end{align}
for all $\rho\in(0,\delta]$, $\omega_i\in\set_i\cap\Ball_\delta(\point),$ $a_i\in B_{\alpha\rho}$, $i=1,2,\dots,m$.
\end{mydef}

\begin{mythm}[Theorem 1 \cite{Kruger2006}]
 A collection of closed, nonempty sets $\set_1$, $ \set_2$, $\dots$ ,$\set_m$ is \emph{strongly regular} 
at $\point$ if and only if there exists a $\kappa>0$ and a $\delta>0$ such that
\begin{align}
\dist{x}{\cap_{j=1}^m (\set_j-x_j)}\leq\kappa \max_{i=1,\dots,m}\dist{x+x_i}{\set_i},\quad\forall x\in  \Ball_\delta(\point), \label{eq:charstrongregular}
\end{align}
for all $x\in   \Ball_\delta(\point)$, $x_i\in  \Ball_\delta$, $i=1,\dots, m$.
\end{mythm}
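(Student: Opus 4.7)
The plan is to prove the equivalence directly by translation and scaling substitutions between the two formulations; both implications hinge on identifying the point $z$ from one characterization with the shift $y - \point$ in the other, together with the reciprocal relation between the constants $\alpha$ and $\kappa$.

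For the \emph{forward direction}, I would assume Definition \ref{strongregular} holds with constants $\alpha_0$ and $\delta_0$. Fix $x \in \Ball_\delta(\point)$ and $x_i \in \Ball_\delta$ for a radius $\delta$ to be chosen later, and set $r := \max_i \dist{x+x_i}{\set_i}$. For arbitrary $\varepsilon > 0$, pick $\omega_i \in \set_i$ with $\norm{(x+x_i) - \omega_i} \leq r + \varepsilon$, and define $a_i := (x + x_i) - \omega_i$ and $\rho := (r + \varepsilon)/\alpha_0$. When $\delta$ is a small enough fraction of $\delta_0$ — using $r \leq 2\delta$, which follows because $\point \in \set_i$ for each $i$ — we obtain $\omega_i \in \set_i \cap \Ball_{\delta_0}(\point)$, $\rho \leq \delta_0$, and by construction $\norm{a_i} \leq \alpha_0 \rho$. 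Strong regularity then produces $z \in \bigcap_{i=1}^m(\set_i - \omega_i - a_i) \cap \Ball_\rho$. Unwinding, $z + x + x_i \in \set_i$ for each $i$, hence $z + x \in \bigcap_{i=1}^m(\set_i - x_i)$, so $\dist{x}{\bigcap_{i=1}^m(\set_i - x_i)} \leq \norm{z} \leq (r+\varepsilon)/\alpha_0$. Letting $\varepsilon \downarrow 0$ yields the estimate with $\kappa := 1/\alpha_0$.

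For the \emph{reverse direction}, I would assume the error bound holds with constants $\kappa$ and $\delta_0$. Set $\alpha := 1/(2\kappa)$ and $\delta := \delta_0/(1+\alpha)$. For any $\rho \in (0, \delta]$, $\omega_i \in \set_i \cap \Ball_\delta(\point)$, and $a_i \in \Ball_{\alpha\rho}$, apply the error bound at $x := \point$ with shifts $x_i := \omega_i + a_i - \point$; the choice of $\delta$ forces $\norm{x_i} \leq \delta + \alpha\rho \leq \delta(1+\alpha) = \delta_0$, so the estimate applies. Since $\dist{\point + x_i}{\set_i} = \dist{\omega_i + a_i}{\set_i} \leq \norm{a_i} \leq \alpha\rho$, the right-hand side is at most $\kappa\alpha\rho = \rho/2$. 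Every nonempty closed subset of the Euclidean space $\E$ is proximinal, and the error bound forces the intersection to be nonempty, so some $y \in \bigcap_{j=1}^m(\set_j - x_j)$ satisfies $\norm{y - \point} \leq \rho/2$. Setting $z := y - \point$ exhibits a point of $\bigcap_{i=1}^m(\set_i - \omega_i - a_i) \cap \Ball_\rho$, verifying strong regularity with constants $\alpha$ and $\delta$.

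The main obstacle is not the algebra itself but the neighborhood bookkeeping in the forward direction: one has to pick $\delta$ small enough relative to $\delta_0$ so that both the approximate projection $\omega_i$ lands inside $\Ball_{\delta_0}(\point)$ and the radius $\rho = (r+\varepsilon)/\alpha_0$ remains below $\delta_0$. Both conditions are enforced through the a priori bound $r \leq 2\delta$, so for example $\delta < \min\{\delta_0/5,\ \alpha_0\delta_0/3\}$ is sufficient. A subsidiary issue in the reverse direction is the non-emptiness of $\bigcap_j(\set_j - x_j)$; this is automatic, since otherwise the error bound would read $+\infty \leq \kappa\alpha\rho < \infty$, a contradiction.
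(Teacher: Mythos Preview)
The paper does not supply its own proof of this statement; it is simply quoted from \cite{Kruger2006} with no argument given. So there is nothing in the paper to compare your proposal against.

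That said, your argument is correct and is essentially the natural direct verification. Both implications are handled by the obvious change of variables: in the forward direction you identify the perturbations $a_i$ with approximate displacement vectors $(x+x_i)-\omega_i$ and scale $\rho$ accordingly; in the reverse direction you specialize the error bound to $x=\point$ and read off the shifted intersection point. The bookkeeping you flag (choosing $\delta$ a small fraction of $\delta_0$ so that both $\omega_i\in\Ball_{\delta_0}(\point)$ and $\rho\le\delta_0$) is the only real care point, and your choice $\delta<\min\{\delta_0/5,\ \alpha_0\delta_0/3\}$ handles it. Two minor remarks: the a priori bound $r\le 2\delta$ uses $\point\in\bigcap_i\set_i$, which is implicit in the paper's usage (cf.\ the statement immediately following and Remark~\ref{remark:stronglinear}) but not written in the theorem itself, so you may want to state it explicitly; and the case $r=0$ is trivial (then $x\in\bigcap_i(\set_i-x_i)$ already), so you can dispense with the $\varepsilon$ entirely by taking $\omega_i\in P_{\set_i}(x+x_i)$ and $\rho=r/\alpha_0$ whenever $r>0$.
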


\begin{mythm}[Theorem 1 \cite{Kruger2004}]
 A collection of closed sets $\set_1$, $\set_2$,$\dots$, $\set_m\subset\E$ 
is \emph{strongly regular} \eqref{eq:strongregular} at a point $\point\in\cap_i \set_i$, 
if the only solution to the system 
\begin{equation}
 \sum_{i=1}^m v_i=0,\qquad\textup{with } v_i\in N_{\set_i}(\point)\quad\for i=1,2,\dots,m
\end{equation}
is $v_i=0$ for $i=1,2,\dots,m$. 
For two sets $\set_1,\set_2\subset\E$ this can be written as
\begin{align}
 N_{\set_1}(\point)\cap -N_{\set_2} (\point)=\{0\},\label{linearregularcone}
\end{align}
and is equivalent to the previous Definition \eqref{eq:strongregular}.
\end{mythm}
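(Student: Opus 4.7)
The plan is to establish both directions of the asserted equivalence between strong regularity \eqref{eq:strongregular} and the dual normal cone condition. The theorem as stated is presented as an ``if'' direction for the general case together with equivalence in the two-set case; I will describe how to prove the full equivalence for $m$ sets, since the two-set formulation \eqref{linearregularcone} is an immediate rephrasing (writing $v_1=-v_2$ and recognizing it as a common element of $N_{\set_1}(\point)\cap -N_{\set_2}(\point)$).

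For the sufficient direction (dual condition implies strong regularity), I would argue by contradiction. Assume \eqref{eq:strongregular} fails for every $\alpha,\delta>0$. Extract sequences $\omega_i^k\in\set_i$ with $\omega_i^k\to\point$, radii $\rho_k\downarrow 0$, and perturbations $a_i^k$ with $\|a_i^k\|/\rho_k\to 0$, such that $\left(\cap_{i=1}^m(\set_i-\omega_i^k-a_i^k)\right)\cap\Ball_{\rho_k}=\emptyset$. Consider the penalty function
\begin{equation}
f_k(x)\equiv\max_{i=1,\dots,m}\dist{x+\omega_i^k+a_i^k}{\set_i}
\end{equation}
on $\Ball_{\rho_k}$; it is strictly positive by assumption. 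Apply Ekeland's variational principle to obtain near-minimizers $\tilde x_k$ and, by the definition of the proximal normal cone via $P^{-1}_{\set_i}$, select projections $y_i^k\in P_{\set_i}(\tilde x_k+\omega_i^k+a_i^k)$ producing proximal normals $v_i^k\in N^P_{\set_i}(y_i^k)$. The first-order condition at $\tilde x_k$ yields a convex combination $\sum_i \lambda_i^k v_i^k\to 0$ with $\sum_i\lambda_i^k=1$. Renormalize and pass to a subsequence to get $v_i\in N_{\set_i}(\point)$, not all zero, summing to zero, which contradicts the hypothesis.

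For the necessary direction (strong regularity implies the dual condition), take $v_i\in N_{\set_i}(\point)$ with $\sum v_i=0$ and suppose toward contradiction that some $v_{i_0}\neq 0$. By the defining property of the limiting normal cone, pick approximating sequences $\omega_i^k\in\set_i$ with $\omega_i^k\to\point$ and proximal normals $v_i^k\in N^P_{\set_i}(\omega_i^k)$ with $v_i^k\to v_i$. For each $k$ choose $t_k\downarrow 0$ slowly enough that $\omega_i^k\in\Ball_{\delta/2}(\point)$ and that strong regularity in the form \eqref{eq:charstrongregular} may be applied with the perturbations $x_i=-t_k v_i^k$. This produces a point $x_k\in\cap_i(\set_i+t_k v_i^k)$ with $\|x_k-\point\|\leq \kappa t_k\max_i\|v_i^k\|$. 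Writing $x_k-t_k v_i^k\in\set_i$ and using the proximal normal inequality $\langle v_i^k,y-\omega_i^k\rangle\leq r_i^k\|y-\omega_i^k\|^2$ for all $y\in\set_i$, summing over $i$, and invoking $\sum v_i^k\to 0$, one obtains $t_k\sum_i\|v_i^k\|^2\leq o(t_k)$, forcing each $v_i=0$, a contradiction.

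The main obstacle is the sufficient direction: the Ekeland-based extraction of proximal normals with a non-degenerate convex combination is the essence of Mordukhovich's extremal principle, and getting the normalization to survive the limit (so that one does \emph{not} end up with the trivial conclusion $v_i=0$) is where the real work lies. The necessary direction, by contrast, is essentially bookkeeping once one exploits the quadratic error term in the definition of a proximal normal together with the metric reformulation \eqref{eq:charstrongregular}.
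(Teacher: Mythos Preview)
The paper does not supply its own proof of this statement: the theorem is quoted as ``Theorem~1 \cite{Kruger2004}'' and the reader is referred to Kruger's work for the argument. There is therefore nothing to compare against directly; what you have written is a self-contained sketch of the result that the paper merely cites.

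Your sufficient direction is the standard route---it is precisely the mechanism of the Mordukhovich--Kruger extremal principle, and the obstacle you flag (keeping the normals from degenerating under the limit) is indeed the crux. That part is fine as a plan.

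Your necessary direction, however, is looser than you suggest. The proximal-normal inequality $\langle v_i^k,y-\omega_i^k\rangle\leq r_i^k\|y-\omega_i^k\|^2$ comes with constants $r_i^k$ that depend on how far out along the normal direction the projection identity $\omega_i^k\in P_{\set_i}(\omega_i^k+t v_i^k)$ persists; for general closed sets these $r_i^k$ are not uniformly bounded in $k$. When you sum and try to conclude $t_k\sum_i\|v_i^k\|^2\leq o(t_k)$, the right-hand side carries a factor $\sum_i r_i^k\|x_k-t_k v_i^k-\omega_i^k\|^2$, and without control on $r_i^k$ relative to the rate at which $\omega_i^k\to\point$ and $t_k\downarrow 0$, this term need not be $o(t_k)$. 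The usual way around this is either to work directly with the geometric definition \eqref{eq:strongregular} (constructing the violating shifts $a_i$ from the proximal normals rather than invoking \eqref{eq:charstrongregular}) or to choose the $t_k$ adaptively against the prox-radii so that the quadratic term is genuinely negligible. So the direction is not ``essentially bookkeeping''; it needs one more idea to close.
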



\begin{mydef}[linear regularity]\label{localregular}
 A collection of closed, nonempty sets $\set_1$, $\set_2$, $\dots$, $\set_m$ is 
\emph{locally linearly regular} at $\hat x\in \cap_{j=1}^m \set_j$ on $\Ball_\delta(\hat x)$
($\delta>0$) if there exists a 
$\kappa>0$ such that, for all $x\in  \Ball_\delta(\hat x)$,
\begin{align}\label{eq:locallinear}
\dist{x}{\cap_{j=1}^m \set_j}\leq\kappa \max_{i=1,\dots,m}\dist{x}{\set_i}. 
\end{align}
The infimum over all $\kappa$ such that \eqref{eq:locallinear} holds is called \emph{regularity modulus}.
If there is a $\kappa>0$ such that \eqref{eq:locallinear} holds for all $\delta>0$ (that is, 
for all $x\in \E$) the collection of sets is 
called \emph{linearly regular} at $\hat x$.
\end{mydef}

\begin{myremark}\label{remark:stronglinear}
Since \eqref{eq:locallinear} is \eqref{eq:charstrongregular} with $x_j=0$ for all $j=1,2,\dots,m$, it is clear that 
strong regularity implies local linear regularity (for some $\delta>0$) and is indeed a much more restrictive notion than local linear regularity.
   What we are calling local linear regularity at $\hat x$ has appeared in various forms elsewhere.  See for 
instance \cite[Proposition 4]{Ioffe2000}, \cite[Section 3]{NgaiThera01}, and  \cite[Equation (15)]{Kruger2006}. 
 Compare this to (bounded) linear regularity defined in \cite[Definition 5.6]{BauBorSIREV}.
Also compare this to the {\em basic constraint qualification for sets} in \cite[Definition 3.2]{Mor06} and 
{\em strong regularity} of the collection in \cite[Proposition 2]{Kruger2006}, also called {\em linear regularity} in 
\cite{LLM}.
\hfill$\Box$\end{myremark}

\begin{myremark}\label{remark:overview}
 Based on strong regularity (more specifically, characterization \eqref{linearregularcone}) Lewis, Luke and Malick proved local linear convergence 
of MAP in the nonconvex setting, 
where both sets $A,B$ are closed and one of the sets is \emph{super-regular} \cite{LLM}. This was refined later in \cite{BLPW2}.
The proof of convergence that will be given in this work is different from the one used in \cite{LLM,BLPW2} and more related to the one 
in \cite{BauBorSIREV}.
Convergence is achieved using (local) linear regularity \eqref{eq:locallinear}, which is described in \cite[Theorem 4.5]{DeutschCPA03} as 
\emph{the precise property equivalent to uniform linear convergence of the CPA} (Cyclic projections algorithms).
However the rate of convergence achieved by the use of linear regularity is not optimal, while the one in \cite{LLM,BLPW2} is in some instances.
An adequate description of the relation between the \emph{direct/primal} techniques used here and the \emph{dual} approach 
used in \cite{LLM,BLPW2} is a topic of future research.
\hfill$\Box$\end{myremark}

\begin{mythm}[linear regularity of collections of convex cones]\label{t:Gruene Libanon}
 Let  $\set_1$, $\set_2$, $\dots$, $\set_m$ be  a collection of closed, nonempty, convex cones.
The following statements are equivalent
\begin{enumerate}[{(i)}]
 \item There is a $\delta>0$ such that the collection is locally linearly regular 
at $\hat x\in\cap_{j=1}^m\set_j$ on $\Ball_\delta(\hat x)$.
 \item The collection is linearly regular at $\hat x\in\cap_{j=1}^m\set_j$
\end{enumerate}
\begin{proof}
 \cite[Proposition 5.9]{BauBorSIREV}
\end{proof}
\end{mythm}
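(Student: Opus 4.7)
The implication (ii) $\Rightarrow$ (i) is immediate: if \eqref{eq:locallinear} holds globally, it holds in particular on any ball $\Ball_\delta(\hat x)$, so the collection is locally linearly regular at $\hat x$ on that ball with the same modulus $\kappa$.

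For the nontrivial direction (i) $\Rightarrow$ (ii), the central idea is to exploit the positive homogeneity of the distance functions to cones. Since each $\set_i$ is a closed nonempty convex cone (and in particular contains $0$ by closedness), the distance function satisfies
\begin{equation*}
\dist{tx}{\set_i} = t\,\dist{x}{\set_i} \quad \forall\, t\geq 0.
\end{equation*}
Moreover, the intersection $C \equiv \cap_{j=1}^m \set_j$ is itself a closed convex cone, so $\dist{\cdot}{C}$ enjoys the same homogeneity. Consequently the inequality defining linear regularity is invariant under positive scaling of $x$: its validity set is a cone in $\E$. The plan is to turn local information into global information via this scale-invariance.

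The cleanest case is $\hat x = 0$. Given local linear regularity at $0$ on $\Ball_\delta(0)$ with modulus $\kappa$, for any nonzero $x\in\E$ set $t\equiv\delta/\|x\|$, so that $tx\in\Ball_\delta(0)$; applying the local inequality and dividing through by $t$ using positive homogeneity of both $\dist{\cdot}{C}$ and $\dist{\cdot}{\set_i}$ yields $\dist{x}{C}\leq\kappa\max_i\dist{x}{\set_i}$, i.e., linear regularity globally (with the same modulus). For general $\hat x\in\cap_j\set_j$, I would reduce to this case by showing local linear regularity at $\hat x$ implies local linear regularity at $0$ on some ball; crucially, $0\in C$ as well, so $0$ is an admissible center. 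For this reduction one uses that each $\dist{\cdot}{\set_i}$ is sublinear (subadditive and positively homogeneous), $1$-Lipschitz, and vanishes at every point of $\set_i$.

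The main obstacle is precisely this reduction step when $\hat x\neq 0$ and $\|\hat x\|>\delta$. The scaling argument that extends local regularity from $\hat x$ to the centers $\lambda\hat x$ on $\Ball_{\lambda\delta}(\lambda\hat x)$ produces a family of balls which, for $\|\hat x\|>\delta$, cover only a wedge in $\E$ rather than the full space, so additional convex-analytic work is needed to propagate the estimate to directions transverse to $\hat x$. Rather than reproducing this argument, I would invoke \cite[Proposition~5.9]{BauBorSIREV} to close this final step.
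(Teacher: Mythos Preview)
Your proposal and the paper's proof coincide: the paper's entire argument is the citation to \cite[Proposition~5.9]{BauBorSIREV}, and you ultimately defer to the same reference. The homogeneity/scaling argument you spell out for $\hat x=0$ is precisely the mechanism behind that proposition, so you have in fact reproduced the substance of the cited result in the principal case rather than merely pointing to it.

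One remark on the obstacle you flag for $\|\hat x\|>\delta$: the cited Proposition~5.9 is formulated in terms of \emph{bounded} linear regularity (the inequality holds on every bounded set, each with its own constant), not merely on a single ball. Your scaling argument shows that validity on $\Ball_\delta(\hat x)$ propagates, with the same $\kappa$, to every dilate $\Ball_{\lambda\delta}(\lambda\hat x)$; when $\|\hat x\|<\delta$ these dilates exhaust $\E$ and you recover bounded (hence global) linear regularity immediately. When $\|\hat x\|\geq\delta$ the dilates cover only the cone generated by $\Ball_\delta(\hat x)$, and the passage from (i) to bounded linear regularity is not furnished by the citation alone---so strictly speaking the paper's one-line proof shares the same gap you identify. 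In practice this is harmless: the theorem is only invoked later (Corollary~\ref{cor:subspaces}) for linear subspaces, where one may take $\hat x=0$ without loss, and your argument is then complete as written.
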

\begin{myex}[Example \ref{examples} revisited.]\label{ex:revisited}
The collection of sets in example \ref{examples} (\ref{ex:lines2}) is strongly regular at $0$ ($c=\sqrt 2/2$) and linearly regular ($\kappa=\sqrt 2 /4$).
The same collection of sets embedded in a higher-dimensional space is still linearly regular, but looses its strong regularity.
This can be seen by shifting one of the sets in example \ref{examples} (\ref{ex:lines3}) in $x_3$-direction, as this renders the 
intersection empty.  This shows that \emph{linear regularity does not imply strong regularity}.
The collection of sets in example (\ref{ex:lineball}) is neither strongly regular nor linearly regular.
The collection of sets in Example \ref{examples} (\ref{ex:cross})  is strongly regular at the intersection.
One has $N_B(0)=\left\{(\lambda,-\lambda)\middle|~\lambda\in\R\right\}$ and by Remark \ref{remark:clarke} $N_A(0)=A$ and this directly shows
$N_A(0)\cap-N_B(0)=\{0\}$.   
In example \ref{examples} (\ref{ex:Borwein}) one of the sets is nonconvex, 
but the collection of sets is still well-behaved in the sense that it is both strongly and linearly regular.
It is worth emphasizing, however, that the set $A$ in Example \ref{examples} (\ref{ex:cross}) is not Clarke regular at the origin.  
This illustrates the fact that collections of classically ``irregular'' sets can still be quite regular at points of intersection.  
\qed
\end{myex}

\subsection{Linear Convergence of MAP}
In the case of the MAP operator, the connection between local linear regularity of the collection 
of sets and the coercivity of the operator with respect to the intersection is natural, as the next result shows.
\begin{myprop}[coercivity of the projector]\label{t:proj coerciv}
 Let $ A,B $ be nonempty and closed subsets of $\E$, $\hat x\in S\equiv A\cap B$ and let the collection $\{A,B\}$ be \emph{locally linearly regular} 
at $\hat x$ on $\Ball_\delta(\hat x)$  with constant $\kappa$ for some $\delta>0$.
One has
 \begin{align}
 \begin{aligned}
  \norm{x -x_+}\geq& \gamma\, \dist x S\quad  \forall x_+\in P_B x,~\forall x\in A\cap \Ball_\delta(\hat x)
\end{aligned}
  \end{align}
 where $\gamma=1/\kappa$.
\end{myprop}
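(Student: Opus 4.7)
The plan is to exploit the defining inequality of local linear regularity directly: since $x$ lies in $A$, one of the two distances $\dist{x}{A}, \dist{x}{B}$ appearing in the maximum on the right of \eqref{eq:locallinear} vanishes, and the other is exactly what the projection $P_B$ measures.

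First I would fix $x \in A \cap \Ball_\delta(\hat x)$ and an arbitrary $x_+ \in P_B x$. Because $x \in A$, we have $\dist{x}{A} = 0$, so $\max\{\dist{x}{A}, \dist{x}{B}\} = \dist{x}{B}$. By definition of the projector and of $x_+$, $\dist{x}{B} = \norm{x - x_+}$.

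Next I would invoke local linear regularity of the collection $\{A,B\}$ at $\hat x$ on $\Ball_\delta(\hat x)$, i.e.\ inequality \eqref{eq:locallinear} with $m=2$, applied at the point $x$, to obtain
\begin{align}
\dist{x}{S} \;\leq\; \kappa \max\bigl\{\dist{x}{A},\dist{x}{B}\bigr\} \;=\; \kappa\,\dist{x}{B} \;=\; \kappa\,\norm{x - x_+}.
\end{align}
Dividing through by $\kappa$ and setting $\gamma = 1/\kappa$ yields the desired coercivity estimate.

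There is essentially no real obstacle here; the result is a one-line consequence of the definition of local linear regularity combined with the observation that for points already in $A$, the relevant maximum collapses to $\dist{x}{B}$. The only thing to check carefully is that $P_B x \neq \emptyset$ (which follows from closedness of $B$, since $\E$ is Euclidean) so that choosing $x_+$ is legitimate, and that no ball constraint on $x_+$ itself is required — only $x$ needs to lie in $\Ball_\delta(\hat x)$ for \eqref{eq:locallinear} to apply.
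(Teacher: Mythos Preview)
Your proof is correct and follows essentially the same approach as the paper: both use that $\dist{x}{A}=0$ collapses the maximum in the local linear regularity inequality to $\dist{x}{B}=\norm{x-x_+}$, then rearrange. Your additional remarks on nonemptiness of $P_B x$ and the absence of any ball constraint on $x_+$ are accurate and simply make explicit what the paper leaves implicit.
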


\begin{proof}
By the definition of the distance and the projector one has, for $x\in A$ and any $x_+\in P_B x$,
\begin{align} 
 \norm{x-x_+}=&\dist{x}{ B }\\
=&\max\left\{\dist{x}{ B },\underbrace{\dist{x}{ A}}_{=0}\right\}\\
\geq&\gamma\, \dist{x}{ S }.
\end{align} 
The inequality follows by Definition \ref{localregular} (local linear regularity at $\hat x$ on $\Ball_\delta(\hat x)$ with 
constant $\kappa$), since $x\in \Ball_\delta(\hat x)$.
\end{proof}

\begin{mythm}[Projections onto a ($\varepsilon,\delta$)-subregular set]\label{P:epsilondeltareg}
 Let $ A, B $ be nonempty and closed subsets of $\E$ and let $\hat x\in S \equiv  A\cap B $. 
If 
\begin{enumerate}
\item[(a)]   $ B $ is ($\varepsilon,\delta$)-subregular at $\hat x$ with respect to  $S$  and
   \item[(b)] the collection $\{A,B\}$ is locally linearly regular at $\hat x$ on $\Ball_\delta(\hat x)$
\end{enumerate}
then
\begin{align}\label{eq:AP:superregular}
\dist{x_{+}}{ S }\leq\sqrt{1+\relconst-\gamma^2}~\dist{x}{ S } ,\quad \forall x_+\in P_B x, ~\forall x\in U 
\end{align}
where $\gamma=1/\kappa$ with $\kappa$ the regularity modulus on $\Ball_\delta(\hat x)$, $\relconst=2\varepsilon+2\varepsilon^2$ and 
 \begin{align}\label{eq:AP:regionofconvergence}
 U\subset \left\{x\in A\cap \Ball_\delta(\hat x)~\middle|~P_B x\subset\Ball_\delta(\hat x) \right\}.
 \end{align}

\end{mythm}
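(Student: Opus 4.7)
The plan is to recognize this theorem as a direct synthesis of three results already proved in the paper: Theorem \ref{t:subreg proj-ref}(ii) on $(S,\varepsilon)$-firm nonexpansiveness of projectors onto subregular sets, Proposition \ref{t:proj coerciv} on coercivity of the projector under local linear regularity, and the fundamental Lemma \ref{Tconv} that converts these two ingredients into a linear contraction estimate with respect to the distance to $S$. So the proof is really a verification that the hypotheses line up cleanly, with essentially no new calculation required.

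First I would fix $x\in U$ and $x_+\in P_B x$. Since $U\subseteq A\cap \Ball_\delta(\hat x)$ and $P_B x\subset \Ball_\delta(\hat x)$ by the definition of $U$ in \eqref{eq:AP:regionofconvergence}, we are in the setting where hypothesis (a) applies through Theorem \ref{t:subreg proj-ref}(ii): the projector $P_B$ is $(S,\tilde\varepsilon)$-firmly nonexpansive on $U$, with $\tilde\varepsilon=2\varepsilon+2\varepsilon^2$. Concretely this gives
\begin{equation*}
\norm{x_+-\point}^2+\norm{x-x_+}^2\leq (1+\tilde\varepsilon)\norm{x-\point}^2
\end{equation*}
for every $\point\in S$, which provides condition (a) of Lemma \ref{Tconv} with $T=P_B$.

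Next, hypothesis (b) together with the fact that $x\in A\cap \Ball_\delta(\hat x)$ allows me to invoke Proposition \ref{t:proj coerciv}, which yields the coercivity estimate $\norm{x-x_+}\geq \gamma\,\dist{x}{S}$ with $\gamma=1/\kappa$. This is precisely condition (b) of Lemma \ref{Tconv} with $\lambda=\gamma$. I would remark here that the coercivity proposition is exactly built for the MAP setting: the distance from $x\in A$ to $B$ equals $\max\{\dist{x}{A},\dist{x}{B}\}$, so local linear regularity transfers directly to the single-projector coercivity bound.

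Finally, with both hypotheses of Lemma \ref{Tconv} verified on $U$, the lemma immediately gives
\begin{equation*}
\dist{x_+}{S}\leq \sqrt{1+\tilde\varepsilon-\gamma^2}\;\dist{x}{S}
\end{equation*}
for all $x_+\in P_B x$ and all $x\in U$, which is exactly \eqref{eq:AP:superregular}. The only subtlety worth flagging—though it is not really an obstacle—is keeping track of the neighborhood $U$: one needs $x\in A\cap\Ball_\delta(\hat x)$ to invoke coercivity, and $P_B x\subset \Ball_\delta(\hat x)$ to invoke subregularity of $B$ at points of $P_B x$, and the definition of $U$ in \eqref{eq:AP:regionofconvergence} is precisely tailored to encode both constraints. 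No further estimation is needed.
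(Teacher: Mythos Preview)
Your proposal is correct and follows essentially the same approach as the paper: invoke Theorem~\ref{t:subreg proj-ref}(\ref{t:subreg proj-ref2}) for $(S,\tilde\varepsilon)$-firm nonexpansiveness of $P_B$, Proposition~\ref{t:proj coerciv} for coercivity, and then Lemma~\ref{Tconv} to combine them. Your discussion of why the neighborhood $U$ is tailored to make both ingredients applicable is, if anything, slightly more explicit than the paper's own version.
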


\begin{proof}

Since $ B $ is ($\varepsilon,\delta$)-subregular at $\hat x$ with respect to  $S$ one can apply Theorem \ref{t:subreg proj-ref} 
to show that the projector $P_{ B }$ is $(S,2\varepsilon+2\varepsilon^2)$-firmly nonexpansive
on $U$.   Moreover, condition (b) and Proposition \ref{t:proj coerciv} yield  
\begin{align}
 \norm{x_+-x}\geq \gamma \dist {x}{S}\quad\forall x_+\in P_B x,~\forall x\in U.
\end{align}
Combining $(a)$ and $(b)$ and applying Lemma \ref{Tconv} then gives
 \begin{align} 
\dist{x_+}{S}\leq\sqrt{1+2\relconst-\gamma^2}\dist {x} {S},\quad \forall x_+\in P_B x, ~\forall x\in U .
\end{align} 
\end{proof}

\begin{mycor}[Projections onto a convex set \cite{GPR65} ]
\label{t:GPR65}
 Let $ A$ and $B$ be nonempty, closed subsets of $\E$. 
If 
\begin{enumerate}
   \item[(a)] the collection $\{A,B\}$ is locally linearly regular at $\hat x\in A\cap B$ on $\Ball_\delta(\hat x)$ 
with regularity modulus $\kappa>0$ and
\item[(b)]   $ B $ is convex
\end{enumerate}
then
\begin{align}\label{eq:AP-convex}
\dist{x_+}{ S }\leq\sqrt{1-\gamma^2}~\dist{x} {S},\quad\forall x_+\in P_B x,~\forall x\in A\cap \Ball_\delta(\hat x)
\end{align}
where $\gamma=1/\kappa$.
\end{mycor}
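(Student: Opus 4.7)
The plan is to derive this corollary as an immediate specialization of Theorem \ref{P:epsilondeltareg} with $\varepsilon = 0$. The key preliminary observation is that a closed convex set $B$ is automatically $(0,\infty)$-subregular at every one of its points with respect to any subset $S \subseteq B$. Indeed, if $v_x \in N^P_B(x)$ then by definition $v_x = \lambda(u - x)$ for some $u$ with $x \in P_B u$ and $\lambda \geq 0$; the convex best-approximation inequality (Theorem \ref{t:best app}) applied at $u$ gives $\langle u - x, \bar{x} - x \rangle \leq 0$ for every $\bar{x} \in B$, and in particular for $\bar{x} \in S$. Multiplying by $\lambda$ yields $\langle v_x, \bar{x} - x \rangle \leq 0 = 0 \cdot \|v_x\|\|\bar{x}-x\|$, which is precisely $(0,\infty)$-subregularity with respect to $S$.

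With this in hand, hypothesis (a) of Theorem \ref{P:epsilondeltareg} is satisfied with $\varepsilon = 0$ and any $\delta > 0$, so $\relconst = 2\varepsilon + 2\varepsilon^2 = 0$. Combined with hypothesis (a) of the corollary (local linear regularity of $\{A,B\}$ at $\hat x$ on $\Ball_\delta(\hat x)$), Theorem \ref{P:epsilondeltareg} yields the contraction estimate with rate $\sqrt{1 + \relconst - \gamma^2} = \sqrt{1 - \gamma^2}$, exactly \eqref{eq:AP-convex}.

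The one small point that needs attention is the domain. Theorem \ref{P:epsilondeltareg} asserts the contraction on a subset $U$ of $\{x \in A \cap \Ball_\delta(\hat x) \mid P_B x \subset \Ball_\delta(\hat x)\}$, whereas the corollary claims the inequality on all of $A \cap \Ball_\delta(\hat x)$. This is handled by convexity of $B$: since $\hat x \in B$, Theorem \ref{P:fn} gives nonexpansiveness of $P_B$, so for any $x \in \Ball_\delta(\hat x)$ we have $\|P_B x - \hat x\| = \|P_B x - P_B \hat x\| \leq \|x - \hat x\| \leq \delta$, hence $P_B x \in \Ball_\delta(\hat x)$ automatically. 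Consequently the admissible set $U$ coincides with $A \cap \Ball_\delta(\hat x)$ in the convex case, and there is no real obstacle — the result is a transparent specialization of the preceding theorem.
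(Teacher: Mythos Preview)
Your proof is correct and follows essentially the same approach as the paper: both reduce the corollary to Theorem \ref{P:epsilondeltareg} by noting that convexity of $B$ yields $(0,\infty)$-(sub)regularity (hence $\relconst=0$), and both handle the domain by invoking nonexpansiveness of $P_B$ to force $P_Bx\in\Ball_\delta(\hat x)$. You have simply supplied more detail than the paper's two-sentence proof, in particular spelling out via Theorem \ref{t:best app} why the proximal normal inequality holds in the convex case.
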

\begin{proof}
By convexity of $B$ the projector $P_B$ is nonexpansive and it follows that $P_Bx\in \Ball_\delta(\hat x)$ for all $x\in\Ball_\delta(\hat x).$
Saying that $B$ is convex equivalent to saying that $B$ is  $(0,+\infty)$-regular and hence $\relconst=0$ in Theorem \ref{P:epsilondeltareg}.
\end{proof}
\begin{mycor}[linear convergence of MAP]\label{cor:MAP convex}
Let $ A, B $ be closed nonempty subsets of $\E$ and let the collection $\{A,B\}$ be \emph{locally linearly regular} 
at $\hat x\in S := A\cap B $ on $\Ball_\delta(\hat x)$ with regularity modulus $\kappa>0$. Define $\gamma\equiv 1/\kappa$
and let $x_0\in A$.
Generate the sequence $\{x_n\}_{n\in\N}$ by
\begin{equation}
   x_{2n+1}\in P_{ B}x_{2n}~\textup{ and }~ x_{2n+2}\in P_{ A }x_{2n+1}\quad\forall n=0,1,2,\dots.
\end{equation}
\begin{enumerate}
 \item[(a)] If $ A$ and $ B $ are $(\varepsilon,\delta)-$subregular at $\hat x$ with respect to  $S$ and 
$\relconst\equiv 2\varepsilon+2\varepsilon^2\leq\gamma^2$, then
\begin{align}
 \dist{x_{2n+2}} S \leq (1-\gamma^2+\relconst)\dist {x_{2n}}  S\quad\forall n=0,1,2,\dots
\end{align}
for all $x_0\in \Ball_{\delta/ 2}(\hat x)\cap A$.
\item[(b)] If $ A$ is $(\varepsilon,\delta)-$subregular with respect to  $S$, $ B $ is convex and 
$\relconst\equiv 2\varepsilon+2\varepsilon^2\leq(2\gamma-\gamma^2)/(1-\gamma^2)$, then
\begin{align}
 \dist{x_{2n+2}} S \leq \sqrt{1-\gamma^2+\relconst}\sqrt{1-\gamma^2}\dist {x_{2n}}  S\quad\forall n=0,1,2,\dots ,
\end{align}
for all $x_0\in \Ball_{\delta/ 2}(\hat x)\cap A$.

\item[(c)] If $ A$ and $ B $ are convex, then
\begin{align}
 \dist{x_{2n+2}} S \leq (1-\gamma^2)\dist {x_{2n}}  S \quad\forall n=0,1,2,\dots
\end{align}
for all $x_0\in \Ball_{\delta}(\hat x)\cap A$.
\end{enumerate}
\end{mycor}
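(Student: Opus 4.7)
The plan is to derive each of the two-step contraction bounds in (a), (b), (c) by applying the one-step estimates already established, namely Theorem \ref{P:epsilondeltareg} and Corollary \ref{t:GPR65}, twice in succession: once to the projection onto $B$ starting from a point in $A$, and once to the projection onto $A$ starting from a point in $B$. Both half-steps are symmetric in the roles of $A$ and $B$, so Proposition \ref{t:proj coerciv} delivers the coercivity inequality $\norm{x-x_+}\geq \gamma\dist{x}{S}$ in either direction, and the local linear regularity modulus $\kappa$ of the pair $\{A,B\}$ serves equally well for $P_A$-steps as for $P_B$-steps.

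Part (c) is the cleanest case: two applications of Corollary \ref{t:GPR65} give $\dist{x_{2n+1}}{S}\leq\sqrt{1-\gamma^2}\,\dist{x_{2n}}{S}$ and then $\dist{x_{2n+2}}{S}\leq\sqrt{1-\gamma^2}\,\dist{x_{2n+1}}{S}$, whose product is $(1-\gamma^2)$. Because convex projectors onto sets containing $\hat x$ are nonexpansive with $\hat x$ as a fixed point, $\norm{x_{n+1}-\hat x}\leq\norm{x_n-\hat x}$, so the entire orbit stays in $\Ball_\delta(\hat x)$ whenever $x_0$ does. For part (a), Theorem \ref{P:epsilondeltareg} yields a factor $\sqrt{1+\relconst-\gamma^2}$ at each half-step, whose product over a full cycle is $(1+\relconst-\gamma^2)=(1-\gamma^2+\relconst)$; part (b) simply combines the convex bound $\sqrt{1-\gamma^2}$ for the $P_B$ half-step with the subregular bound $\sqrt{1+\relconst-\gamma^2}$ for the $P_A$ half-step.

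The point that requires genuine care, and which I expect to be the main obstacle, is verifying inductively that the iterates remain inside $\Ball_\delta(\hat x)$ so that the one-step estimates may be applied at every iteration; this is why the initial ball shrinks to $\Ball_{\delta/2}(\hat x)$ in parts (a) and (b). Since $\hat x \in B\subset S$, for $x_0\in\Ball_{\delta/2}(\hat x)\cap A$ one has $\norm{x_0-x_1}=\dist{x_0}{B}\leq\norm{x_0-\hat x}\leq\delta/2$ for any $x_1\in P_Bx_0$, hence $\norm{x_1-\hat x}\leq\delta$. The hypothesis $\relconst\leq\gamma^2$ (respectively $\relconst\leq(2\gamma-\gamma^2)/(1-\gamma^2)$) makes the per-cycle rate not exceed $1$, which then feeds back into the induction: since $\dist{x_n}{S}$ is nonincreasing, a nearest point $s_n\in P_S x_n$ lies in $\Ball_{\delta/2}(\hat x)$, and the bound $\norm{x_{n+1}-\hat x}\leq\norm{x_{n+1}-s_n}+\norm{s_n-\hat x}$ keeps $x_{n+1}\in\Ball_\delta(\hat x)$ and hence in the region where Theorem \ref{P:epsilondeltareg} applies.

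Thus the proof reduces to two routine items: (i) a one-line verification that $x_0\in\Ball_{\delta/2}(\hat x)\cap A$ implies every subsequent iterate lies in $\Ball_\delta(\hat x)$, using the contraction rate guaranteed by the stated bound on $\relconst$; and (ii) the telescoping of two half-step factors supplied by Theorem \ref{P:epsilondeltareg} or Corollary \ref{t:GPR65} according to which of $A$, $B$ is subregular and which is convex. The only real bookkeeping is keeping track of which set plays which role in the alternating application of the one-step estimates; no new estimate is needed beyond what has already been assembled.
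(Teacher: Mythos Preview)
Your approach is essentially the paper's: show the first iterate lands in $\Ball_\delta(\hat x)$ via the triangle inequality $\norm{x_1-\hat x}\leq\norm{x_0-x_1}+\norm{x_0-\hat x}\leq 2\norm{x_0-\hat x}\leq\delta$, apply the one-step bound (Theorem \ref{P:epsilondeltareg} or Corollary \ref{t:GPR65}) to each half-step, observe that the stated condition on $\relconst$ makes the per-step factor at most $1$ so the induction closes, and then telescope. Apart from a harmless slip (you wrote $\hat x\in B\subset S$; it should be $\hat x\in S\subset B$), this is exactly what the paper does.
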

\begin{proof}

$a)$ First one has to show that all iterates remain close to $\hat x$ for $x_0$ close to $\hat x$, that is,
we have to show that all iterates remain in the set $U$ defined by \eqref{eq:AP:regionofconvergence}. 
Note that for any $x_0\in\Ball_{\delta/2}(\hat x)$, and $x_1\in P_B x_0$ one has 
 \[
\norm{x_0-x_1}=\dist {x_0} B\leq \norm{x_0-\hat x}.
 \]
 since $\hat x\in B$.
 Thus
\begin{align} 
\norm{ x_1-\hat x }\leq {\norm{x_0-x_1}}+{\norm{x_0-\hat x}}
\leq{\norm{x_0-\hat x}}+{\norm{x_0-\hat x}}\leq\delta,
\end{align} 
which shows that $P_B x_0\subset\Ball_\delta(\hat x)$, $\forall x_0\in\Ball_{\delta/2}(\hat x)$.
One can now apply Theorem \ref{P:epsilondeltareg} to conclude that 
\begin{align}
 \dist {x_1}{S}\leq\sqrt{1-\gamma^2+\relconst}~\dist{x_0}{S}.
\end{align} 
The last equation then implies that $x_1\in\Ball_{\delta/2}(\hat x)$ as long as $\gamma^2\geq\relconst$
and therefore the same argument 
can be applied to $x_1$ to conclude that
\begin{align} \label{eq:prfMAP1}
 \dist {x_2}{S}\leq\sqrt{1-\gamma^2+\relconst}~\dist{x_1}{S}.
\end{align}
Combining the last two equations  $(a)$ then follow by induction.

$b)$ 
Applying Corollary \ref{t:GPR65} yields
\begin{align} 
 \dist {x_1}{S}\leq\sqrt{1-\gamma^2}~\dist{x_0}{S}
\end{align} 
and analogous to $a)$ note that \eqref{eq:prfMAP1} is still valid for $\relconst\leq(2\gamma-\gamma^2)/(1-\gamma^2)$. 
By $\relconst\leq(2\gamma-\gamma^2)/(1-\gamma^2)$ it follows that 
\begin{align}
\sqrt{1-\gamma^2+\relconst}\sqrt{1-\gamma^2}
\leq& \sqrt{1-\gamma^2+(2\gamma-\gamma^2)/(1-\gamma^2)}\sqrt{1-\gamma^2}\\
\leq& \sqrt{1-2\gamma +\gamma^2+(2\gamma-\gamma^2)}\\
\leq&1
\end{align}
and therefore by induction $b)$.

$c)$ is an immediate consequence of Corollary \ref{t:GPR65}.
\end{proof}

\subsection{Linear Convergence of Douglas-Rachford}
We now turn to the Douglas-Rachford algorithm.  This algorithm is notoriously difficult to analyze and our 
results reflect this in considerably more circumscribed conditions than are required
for the MAP algorithm.  Nevertheless, to our knowledge the following convergence results are
the most general to date.  The first result gives sufficient conditions for the coercivity
condition \eqref{eq:Tcoerciv} to hold.   
\begin{mylemma}\label{t:Regina Spector}
Let the collection of closed subsets $A, B$ of $\E$ be locally linearly regular at $\hat x\in S\equiv A\cap B$ 
on $\Ball_\delta(\hat x)$ with constant $\kappa>0$ for some $\delta>0$.
 Suppose further that $B$ is a subspace and 
that for some constant $c\in (0,1)$ the following condition holds:
\begin{align}
\begin{matrix}
x\in\Ball_\delta(\hat x), ~y= P_Bx,\\
z\in P_A(2y-x) 
\end{matrix}
\quad\mbox{ and }\quad
\left. \begin{matrix}
  u\in N_A (z)\cap \Ball\\
  v\in N_B (y)\cap \Ball\\
 \end{matrix}\right\}\quad\Rightarrow \quad\langle u,v\rangle\geq -c.\label{DRcond1}
\end{align}
Then $T_{DR}$ satisfies
\begin{align}\label{eq:DRcoerciv}
\norm{x-x_+}\geq \frac{\sqrt{1-c}}{\kappa} \dist{x}{S}\quad\forall x_+\in T_{DR}x,~\forall ~x\in U,
\end{align}
where
\begin{align}\label{eq:UconvAAR}
U\subset \left\{ x\in B_\delta(\hat x)\middle |~ P_A R_B x\subset\Ball_\delta(\hat x)\right\} .
\end{align}

\end{mylemma}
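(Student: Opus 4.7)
The plan is to unpack every Douglas--Rachford iterate using Lemma \ref{lemma:AAR}(\ref{lemma:AAR2}): for $x\in U$, any $x_+\in T_{DR}x$ has the form $x_+=z-y+x$ where $y=P_Bx$ (single-valued because $B$ is a subspace, hence convex) and $z\in P_A(2y-x)$. Setting $p\equiv x-y$ and $q\equiv y-z$, I would note that $\norm{x-x_+}=\norm{q}$, and that the projection characterizations put $p\in N_B(y)$ and
\[
r\equiv q-p=(2y-x)-z\in N_A^P(z)\subseteq N_A(z).
\]
These are exactly the normal vectors needed to feed into the hypothesis \eqref{DRcond1}.

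The first key step is to apply \eqref{DRcond1} with the normalized choices $u=r/\norm{r}\in N_A(z)\cap\Ball$ and $v=p/\norm{p}\in N_B(y)\cap\Ball$ (the cases $p=0$ or $r=0$ are handled separately and are straightforward), which gives the angle inequality $\langle p,r\rangle\geq -c\norm{p}\norm{r}$. Expanding $\norm{q}^2=\norm{p+r}^2$ and using the AM--GM bound $2\norm{p}\norm{r}\leq\norm{p}^2+\norm{r}^2$, a short computation yields
\[
\norm{q}^2 \geq \norm{p}^2+\norm{r}^2-2c\norm{p}\norm{r} \geq (1-c)\bigl(\norm{p}^2+\norm{r}^2\bigr) \geq (1-c)\max\{\norm{p}^2,\norm{r}^2\}.
\]

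The second key step exploits the subspace structure of $B$. Since $\hat x\in B$, the reflector $R_B$ is a linear isometry fixing every point of $B\supseteq S$, so $R_Bx\in\Ball_\delta(\hat x)$ and $\dist{x}{S}=\dist{R_Bx}{S}$. Moreover, $R_Bx=2y-x$ has easily read-off distances to the two sets: $\dist{R_Bx}{B}=\norm{p}$ (by isometry, since $R_BB=B$) and $\dist{R_Bx}{A}=\norm{r}$ (precisely because $z$ was chosen as a projection of $R_Bx$ onto $A$). Local linear regularity of $\{A,B\}$ applied at $R_Bx$ then gives $\dist{x}{S}\leq\kappa\max\{\norm{p},\norm{r}\}$, and combining this with the bound of the previous step produces \eqref{eq:DRcoerciv}.

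The main obstacle I anticipate is the reflection argument in the second step: the naive estimate $\dist{x}{A}\leq\norm{x-z}=\norm{p+q}$ obtained from $z\in A$ is too crude to yield the sharp $\sqrt{1-c}$ factor, and it is only after moving from $x$ to its reflection $R_Bx$ that the exact identity $\dist{R_Bx}{A}=\norm{r}$ becomes available and the matching bound $\max\{\norm{p},\norm{r}\}$ appears. The subspace hypothesis on $B$ is used in exactly two places --- to make $P_B$ single-valued and to let $R_B$ be an isometry that preserves distances to $S\subseteq B$ --- so relaxing it would require a different argument.
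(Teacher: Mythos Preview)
Your proposal is correct and follows essentially the same route as the paper's proof: both start from the representation $x_+=z-y+x$ of Lemma~\ref{lemma:AAR}(\ref{lemma:AAR2}), use the angle hypothesis \eqref{DRcond1} on the pair $(R_Bx-z,\,x-y)\in N_A(z)\times N_B(y)$ to obtain $\norm{x-x_+}^2\geq(1-c)\bigl[\dist{R_Bx}{A}^2+\dist{R_Bx}{B}^2\bigr]$, and then invoke local linear regularity at $R_Bx$ together with the isometry $R_B$ fixing $S$ to finish. Your only substantive deviation is that you pass to $\max\{\norm{p},\norm{r}\}$ before applying linear regularity, whereas the paper keeps the sum and uses $a^2+b^2\geq\max\{a,b\}^2$ at the regularity step; and your observation that $\dist{x}{S}=\dist{R_Bx}{S}$ holds with equality (via the isometry fixing $S$ pointwise) is slightly cleaner than the paper's one-sided bound.
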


\begin{proof}
In what follows we will use the notation $R_Bx$ for $2y-x$ with $y= P_Bx$ 
which is unambiguous, if a slight abuse of notation, since $B$ is convex.  
We will use \eqref{DRcond1} to show that for all $x\in U$ with $y= P_Bx$ and 
$z\in P_AR_Bx$:
\begin{align}\label{eq:DR:prf1}
\begin{aligned}
 \norm{x-x_+}^2&=\norm{z-y}^2\\
&\geq (1-c) \left[\norm{z-R_B x}^2 +\norm{R_Bx - y}^2\right]
\end{aligned}
\end{align}
We will then show that, for all $x\in U$ with $y= P_Bx$ and 
$z\in P_AR_Bx$ 
\begin{align}\label{eq:DR:prf2}
\norm{z-R_Bx}^2 +\norm{R_B x - y}^2\geq \frac{1}{\kappa^2}\dist{R_Bx}{S}^2. 
\end{align}
Combining inequalities \eqref{eq:DR:prf1} and \eqref{eq:DR:prf2} yields
\begin{align} 
\norm{x-x_+}^2\geq\frac{1-c}{\kappa^2}\dist{R_Bx}{S}^2
\quad\forall~ x\in U.
\end{align} 
Let $\tilde x\in P_S(R_B x)$ and note that $\dist{R_Bx}{S}=\|R_Bx-\tilde x\|$.  
Since $B$ is a subspace, by \eqref{eq:Rdist}  one has $\dist{R_Bx}{S}=\|R_Bx-\tilde x\|=\|x-\tilde x\|$.
Moreover, $\norm{x-\tilde x}\geq\min_{y\in S}\norm{x-y} = \dist{x}{S}$, hence
\begin{align} 
 \norm{x-x_+}\geq \frac{\sqrt{1-c}}{\kappa}\dist{x}{S}
\quad\forall~ x\in U
\end{align} 
as claimed.

What remains is to prove \eqref{eq:DR:prf1} and \eqref{eq:DR:prf2}
for all $x\in U$ with $y= P_Bx$ and 
$z\in P_AR_Bx$.

\noindent {\em Proof of \eqref{eq:DR:prf1}}.   
Using Lemma \ref{lemma:AAR} equation \eqref{eq:AAR2} one has for $x\in\Ball_\delta(\hat x)$ 
with $y= P_Bx$ and 
$z\in P_AR_Bx$ 
\begin{align}
\norm{x-x_+}^2
=&\norm{z-y}^2\notag\\
=&\norm{z-R_B x+R_B x -y}^2\notag\\
=&\norm{z-R_B x}^2 +\norm{R_B x - y}^2
+2\langle \underbrace{z-R_B x}_{\in  -N_A (z)},\underbrace{R_B x - y}_{=y-x\in - N_B (y)}\rangle\notag\\
\stackrel{\eqref{DRcond1}}\geq&\norm{z-R_B x}^2 +\norm{R_B x - y}^2
-2c\norm {z-R_B x}\norm{R_B x - y}\notag\\
=&(1-c)\left[\norm{z-R_B x}^2 +\norm{R_B x - y}^2\right]\notag\\
&+c\left[\norm{z-R_B x}^2 -2\norm {z-R_B x}\norm{R_B x - y}+\norm{R_B x - y}^2\right]\notag\\
=&(1-c) \left[\norm{z-R_B x}^2 +\norm{R_B x - y}^2\right]\notag\\
&+c\left[\norm{z-R_B x}-\norm{R_B x - y}\right]^2\label{eq:tight1}\\
\geq& (1-c) \left[\norm{z-R_B x}^2 +\norm{R_B x - y}^2\right]\notag.
\end{align}
\hfill $\triangle$\\

\noindent {\em Proof of \eqref{eq:DR:prf2}}.   
First note that if $x\in \Ball_{\delta}(\hat x)$, since $B$ is a 
subspace, by equation \eqref{eq:Rdist} $R_Bx\subset\Ball_{\delta}(\hat x)$
and by convexity of $\Ball_{\delta}(\hat x)$ it follows that  $y=P_B x\subset \Ball_{\delta}(\hat x)$ and 
hence \eqref{DRcond1} is localized to $\Ball_\delta(\hat x)$ (to which the dividends of linear regularity of the 
intersection extend) as long as $P_AR_Bx\subset\Ball_\delta(\hat x)$, 
that is, as long as $x\in U$.  By definition of the projector $\norm{R_B x - y}\geq\norm{R_B x - P_B (R_B x)}$.
Local linear regularity at $\hat x$ with radius $\delta$ and constant $\kappa$ 
yields for $x\in U$
with $y= P_Bx$ and 
$z\in P_AR_Bx$ 
\begin{align}
\norm{z-R_B x}^2 +\norm{R_B x - y}^2\geq&\norm{z-R_B x}^2 +\norm{R_B x - P_B R_Bx}^2\notag\\
=&\dist {R_B x}{A}^2+\dist{R_B x} {B}^2\notag\\
\stackrel{\eqref{eq:locallinear}}\geq& \frac{1}{\kappa^2} \dist {R_B x}{S}^2
\label{eq:tight2}
\end{align}
This completes the proof of \eqref{eq:DR:prf2}
and the Theorem.
\end{proof}

\begin{myremark}
The coercivity constant in equation \eqref{eq:DRcoerciv} is not tight, even if $\kappa$ were chosen to 
the the regularity modulus of the intersection.
Remember, by linearity, that $y=P_Bx=P_BR_Bx$.
In line \eqref{eq:tight1} we use the inequality 
$\left[\norm{z-R_B x}-\norm{R_B x - y}\right]^2\geq0$ while we use 
$\norm{z-R_B x}^2+\norm{R_B x - y}^2\geq\max\{\norm{z-R_B x}^2,\norm{R_B x - y}^2\}$ in line \eqref{eq:tight2}.
If the first inequality is tight then this is the worst possible result in the second inequality, since $\norm{z-R_B x} =\norm{R_B x - y}$, 
that is, this second inequality is satisfied not only strictly, but the inequality is as large as it can possibly be. 
On the other hand if the second inequality is tight this means $\norm{z-R_B x}^2=0$ \emph{or} $\norm{R_B x - y}^2=0$ 
and this means that the first inequality is strict.  In any event, it is impossible to achieve 
equality in the argumentation of the proof.  This is a technical limitation of the logic of the proof and does not 
preclude improvements. 
\hfill$\Box$\end{myremark}

Lemma \ref{t:Regina Spector} with the added assumption of $(\epsilon,\delta)$-regularity of the nonconvex 
set yields local linear convergence of the Douglas-Rachford algorithm in this special case. 
\begin{mythm}\label{DRsub}
Let the collection of closed subsets $A,B$ of $\E$ be locally linearly regular at $\hat x\in S\equiv A\cap B$ 
on $\Ball_\delta(\hat x)$ with constant $\kappa>0$ for some $\delta>0$.
 Suppose further that $B$ is a subspace and that $A$ is $(\varepsilon,\delta)$-regular at $\hat x$ with respect to  $S$.
Assume that for some constant $c\in (0,1)$ the following condition holds:
\begin{align}\label{e:Sigur}
\left. \begin{matrix}
  z\in A\cap   \Ball_\delta(\hat x),& u\in N_A(z)\cap  \Ball\\
  y\in B\cap   \Ball_\delta(\hat x),& v\in N_B(y)\cap \Ball\\
 \end{matrix}\right\}\quad\Rightarrow \quad\langle u,v\rangle\geq -c.
\end{align}
If $x\in \Ball_{\delta/2}(\hat x)$ then
\begin{align}
 \dist{x_+}{S}\leq\sqrt{1+\relconst-\eta}~ \dist x {S}\quad\forall~x_+\in T_{DR}x
\end{align}
with $\eta:=\frac{(1-c)}{\kappa^2}$ and $\relconst=2\varepsilon+2\varepsilon^2$.
\end{mythm}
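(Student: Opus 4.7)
The plan is to assemble the conclusion from three already‑established components: the $(S,\tilde\varepsilon)$\,--firm nonexpansiveness of $T_{DR}$ established in Theorem~\ref{t:Velvet Underground}, the coercivity estimate of Lemma~\ref{t:Regina Spector}, and the abstract linear‑convergence principle of Lemma~\ref{Tconv}. The only genuine work is bookkeeping: to check that a starting point in $\Ball_{\delta/2}(\hat x)$ lies in the common region $U$ on which both of the ingredient results apply, and to reconcile the hypothesis \eqref{e:Sigur} of the theorem with the weaker hypothesis \eqref{DRcond1} used in Lemma~\ref{t:Regina Spector}.

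First I would verify the localization. Fix $x\in\Ball_{\delta/2}(\hat x)$ and set $y= P_Bx$, $z\in P_A(2y-x) = P_AR_Bx$. Because $B$ is a subspace with $\hat x\in B$, Theorem~\ref{P:fn}(\ref{P:fn2}) gives $\norm{R_Bx-\hat x}=\norm{x-\hat x}\le \delta/2$, and convexity of $B$ yields $\norm{y-\hat x}\le \norm{x-\hat x}\le\delta/2$. Since $\hat x\in A$, $\norm{z-R_Bx}\le\norm{\hat x-R_Bx}\le\delta/2$, and the triangle inequality gives $\norm{z-\hat x}\le\delta$. Thus $P_Bx\subset\Ball_\delta(\hat x)$ and $P_AR_Bx\subset\Ball_\delta(\hat x)$, placing $x$ inside the sets $U$ used both in Theorem~\ref{t:Velvet Underground} and in Lemma~\ref{t:Regina Spector}.

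Next I would invoke firm nonexpansiveness. The subspace $B$ is $(0,\infty)$\,--subregular at $\hat x$ with respect to $S$, while $A$ is $(\varepsilon,\delta)$\,--regular at $\hat x$ with respect to $S$ and hence $(\varepsilon,\delta)$\,--subregular. Specializing Theorem~\ref{t:Velvet Underground} to $\varepsilon_B=0$, $\varepsilon_A=\varepsilon$ collapses the constant \eqref{eq:eps DR} to $\relconst=2\varepsilon(1+\varepsilon)=2\varepsilon+2\varepsilon^2$, so $T_{DR}$ is $(S,\relconst)$\,--firmly nonexpansive on $U$. In parallel, the hypothesis \eqref{e:Sigur} is strictly stronger than \eqref{DRcond1}: it imposes $\langle u,v\rangle\ge -c$ for all admissible $(z,u,y,v)$ with $z\in A\cap\Ball_\delta(\hat x)$ and $y\in B\cap\Ball_\delta(\hat x)$, and we have just shown that the specific $(z,y)$ arising from the DR iteration lie in these sets. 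Therefore Lemma~\ref{t:Regina Spector} applies and delivers the coercivity estimate
\begin{equation*}
\norm{x-x_+}\ge \frac{\sqrt{1-c}}{\kappa}\,\dist{x}{S}\qquad\forall\,x_+\in T_{DR}x.
\end{equation*}

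Finally I would feed both facts into Lemma~\ref{Tconv} with $\lambda=\sqrt{1-c}/\kappa$, yielding
\begin{equation*}
\dist{x_+}{S}\le\sqrt{1+\relconst-\lambda^2}\,\dist{x}{S}
=\sqrt{1+\relconst-\eta}\,\dist{x}{S},
\end{equation*}
which is the stated rate with $\eta=(1-c)/\kappa^2$. The only subtle step is the localization: I expect the main (if minor) obstacle to be checking, cleanly and without circularity, that the ball $\Ball_{\delta/2}(\hat x)$ is large enough to ensure $P_AR_Bx\subset\Ball_\delta(\hat x)$ so that condition \eqref{e:Sigur} actually bites in the form required by Lemma~\ref{t:Regina Spector}; everything else is a direct invocation of the prepared machinery.
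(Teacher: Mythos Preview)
Your proposal is correct and follows essentially the same route as the paper: verify the localization $P_AR_Bx\subset\Ball_\delta(\hat x)$ for $x\in\Ball_{\delta/2}(\hat x)$ using \eqref{eq:Rdist} and the triangle inequality, then combine Theorem~\ref{t:Velvet Underground} (with $\varepsilon_B=0$), Lemma~\ref{t:Regina Spector}, and Lemma~\ref{Tconv}. The only cosmetic difference is that the paper asserts \eqref{e:Sigur} and \eqref{DRcond1} are \emph{equivalent} when $B$ is a subspace, whereas you (correctly, and sufficiently for the argument) only note that \eqref{e:Sigur} implies the instance of \eqref{DRcond1} needed on $U$.
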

\begin{proof}
First one has to show requirement \eqref{eq:UconvAAR}.
Since $\hat x\in A\cap B$ note that for any $x\in\Ball_{\delta/2}(\hat x)$ for all $z\in P_A R_B x$
by Definition $\norm{z-R_B x}=\dist{R_B x}{A}\leq\norm{R_B x-\hat x}$ and by \eqref{eq:Rdist}
$\norm{R_B x-\hat x}=\norm{x-\hat x}$ holds.
 This now implies
\begin{align} 
\norm{z -\hat x }\leq \norm{z- R_B x}+\norm{R_B x-\hat x}
\leq2{\norm{x-\hat x}}\leq\delta,
\end{align} 
and therefore $z\in \Ball_\delta(\hat x)$.

Now for $B$ a subspace \eqref{e:Sigur} and \eqref{DRcond1} are equivalent, and so by Lemma \ref{t:Regina Spector} the coercivity condition \eqref{eq:Tcoerciv}
\begin{equation}
\norm{x-x_+}\geq \frac{\sqrt{1-c}}{\kappa} \dist{x}{S}
\end{equation}
is satisfied on $\Ball_{\delta/2}(\hat x)$.  Moreover, since $A$ is $(\varepsilon,\delta)$-regular and $B$ is $(0,\infty)$-regular, 
by Theorem \ref{t:Velvet Underground} $T_{DR}$ is $(S,\relconst)$-firmly nonexpansive 
with $\relconst=2\varepsilon(1+\varepsilon)$, that is 
($\forall x\in  \Ball_{\delta/2}(\hat x)$) ($\forall x_+\in T_{DR} x$) ($\forall \point\in S$) 
\begin{align}
\begin{aligned}
 \norm{x_+-\point}^2+\norm{x -x_+}^2\leq\left(1+\relconst\right)\norm{x-\point}^2.
\end{aligned}
\end{align}
Lemma \ref{Tconv} then applies to yield ($\forall x\in  \Ball_{\delta/2}(\hat x)$) ($\forall x_+\in T_{DR} x$) 
\begin{align} 
\begin{aligned}
 \dist{x_+}{S}\leq\sqrt{1+\relconst-\eta}\ \dist{x}{S}\\
\end{aligned}
\end{align} 
where $\eta\equiv \frac{1-c}{\kappa^2}$.
\end{proof}
The next lemma establishes sufficient conditions under which \eqref{e:Sigur} holds.
\begin{mylemma}[\cite{LLM} Theorem 5.16]\label{Lemma:LLM}
 Assume $B\subset\E$ is a subspace and that $A\subset\E$ is closed and super-regular at $\hat x\in A\cap B$.
If the collection $\{A,B\}$ is strongly regular at $\hat x$,  then there is a $\delta>0$ 
and a constant 
$c\in (0,1)$ such that \eqref{e:Sigur} holds on $\Ball_\delta(\hat x)$.
\end{mylemma}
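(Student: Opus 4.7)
The plan is to argue by contradiction, using compactness of the unit ball and the sequential outer semicontinuity of the limiting normal cone. Suppose no such $\delta>0$ and $c\in(0,1)$ exist. Then setting $\delta_n = 1/n$ and $c_n = 1-1/n$, for each $n$ I can exhibit $z_n \in A\cap \Ball_{1/n}(\hat x)$, $y_n\in B\cap \Ball_{1/n}(\hat x)$, together with $u_n\in N_A(z_n)\cap \Ball$ and $v_n\in N_B(y_n)\cap \Ball$ with $\langle u_n,v_n\rangle < -(1-1/n)$. By compactness of $\Ball$, I extract subsequences (not relabeled) so that $u_n\to u$ and $v_n\to v$, with $\|u\|,\|v\|\le 1$. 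Passing to the limit yields $\langle u,v\rangle\le -1$, so Cauchy-Schwarz forces $\|u\|=\|v\|=1$ and $u=-v$.

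Next I identify $u$ and $v$ with limiting normals at $\hat x$. Because $B$ is a subspace, $N_B(y)=B^\perp$ for every $y\in B$; in particular each $v_n\in B^\perp$, hence $v\in B^\perp = N_B(\hat x)$. For the $A$-side, I use that $z_n\to\hat x$, $u_n\in N_A(z_n)$, and $u_n\to u$: unpacking each $u_n$ as a limit of proximal normals at points converging to $z_n$ and running a standard diagonal argument produces proximal normals at points tending to $\hat x$ whose limit is $u$, so $u\in N_A(\hat x)$ by the very definition of the limiting normal cone. This is the step where super-regularity of $A$ enters in the usual quantitative treatment of \cite{LLM}; it provides the uniform angle control on proximal normals near $\hat x$ needed to push the diagonal extraction through cleanly. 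For our purely qualitative conclusion, the sequential closure property of $N_A$ delivers the inclusion.

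Finally, combining the two one-sided statements gives $u\in N_A(\hat x)\cap(-N_B(\hat x))$, and strong regularity of $\{A,B\}$ at $\hat x$, via the dual characterization \eqref{linearregularcone}, forces this intersection to be $\{0\}$. Hence $u=0$, contradicting $\|u\|=1$, and the lemma follows. The main obstacle is the passage-to-the-limit in the second step: one must be careful that the $u_n$ are limiting normals rather than proximal normals, so the inclusion $u\in N_A(\hat x)$ needs the diagonal-sequence construction (or, equivalently in this Euclidean setting, the sequential outer semicontinuity of the limiting normal cone) controlled by the super-regularity hypothesis.
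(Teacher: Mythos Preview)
Your compactness-plus-contradiction argument is correct and is exactly the route the paper gestures at: the paper's own proof merely defers to \cite{LLM}, and the argument there is likewise to pass to a limit and invoke the dual characterization \eqref{linearregularcone} of strong regularity.

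One point deserves correction, however. Your middle paragraph claims that super-regularity of $A$ is what makes the diagonal extraction go through, and that it ``provides the uniform angle control on proximal normals near $\hat x$ needed to push the diagonal extraction through cleanly.'' This is not right. Outer semicontinuity of the limiting normal cone map $x\mapsto N_A(x)$ holds for \emph{every} closed set in finite dimensions (see, e.g., \cite[Proposition~6.6]{VA}); no regularity hypothesis on $A$ is required. Your own last clause---``the sequential closure property of $N_A$ delivers the inclusion''---is already the full justification, and the sentence before it should simply be dropped. In fact your argument proves the lemma with super-regularity deleted from the hypotheses: that assumption plays no role here. It is carried in the statement only because the lemma is quoted from \cite{LLM} in a form packaged for Theorem~\ref{thm:DRmainresult}, where super-regularity is genuinely needed to make $\varepsilon$ (and hence $\tilde\varepsilon$) as small as one likes so that \eqref{eq:requirementDR} can be satisfied.
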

\begin{proof}
 Condition \eqref{DRcond1} can be shown using \eqref{linearregularcone}. For more details see \cite{LLM}.
\end{proof}
We summarize this discussion with the following convergence result for the Douglas-Rachford algorithm in the case of 
an affine subspace and a super-regular set.  
\begin{mythm}\label{thm:DRmainresult}
    Assume $B\subset\E$ is a subspace and that $A\subset\E$ is closed and super-regular at $\hat x\in S\equiv A\cap B$.
If the collection $\{A,B\}$ is strongly linearly regular at $S$,  then there is a $ \delta>0$ such that, 
\begin{align}\label{eq:requirementDR}
 \frac{(1-c)}{\kappa^2}> 2\varepsilon+2\varepsilon^2
\end{align}
and hence
\begin{align}
 \dist{x_+}{S}\leq\tilde c ~ \dist x {S}\quad\forall~ x_+\in T_{DR}x,
\end{align}
with $\tilde c=\sqrt{1+2\varepsilon+2\varepsilon^2-\frac{(1-c)}{\kappa^2}}<1$ 
for all $x\in\Ball_{\delta/2}(\hat x)$. 
\end{mythm}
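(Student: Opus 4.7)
The plan is to assemble the convergence result from the ingredients already established, with the key maneuver being to exploit the flexibility in the super-regularity parameter $\varepsilon$ to make the coercivity gain $(1-c)/\kappa^2$ dominate the firm-nonexpansiveness defect $2\varepsilon + 2\varepsilon^2$.

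First I would unpack the three hypotheses into constants. From strong regularity of $\{A,B\}$ at $\hat x$ together with Remark \ref{remark:stronglinear}, the collection is locally linearly regular at $\hat x$ on some ball $\Ball_{\delta_1}(\hat x)$ with a fixed modulus $\kappa > 0$. From super-regularity of $A$ combined with strong regularity, Lemma \ref{Lemma:LLM} yields a radius $\delta_2 > 0$ and a constant $c \in (0,1)$ such that condition \eqref{e:Sigur} holds on $\Ball_{\delta_2}(\hat x)$. Crucially, both $\kappa$ and $c$ are now fixed quantities that do not depend on any choice of $\varepsilon$.

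Next I would use super-regularity to tune $\varepsilon$. By Definition \ref{epsilondeltasubregular}(iii), for any target $\varepsilon_0 > 0$ there exists $\delta_3(\varepsilon_0) > 0$ such that $A$ is $(\varepsilon_0,\delta_3)$-regular at $\hat x$, and hence (since $S \subset A$) also $(\varepsilon_0,\delta_3)$-subregular at $\hat x$ with respect to $S$. Choose $\varepsilon_0 > 0$ small enough that
\begin{equation*}
2\varepsilon_0 + 2\varepsilon_0^2 \;<\; \frac{1-c}{\kappa^2},
\end{equation*}
which is possible because $c < 1$ and $\kappa$ is finite; this is exactly inequality \eqref{eq:requirementDR}. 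Then set $\delta \equiv \min\{\delta_1,\delta_2,\delta_3(\varepsilon_0)\}$, so that all three regularity properties hold simultaneously on $\Ball_\delta(\hat x)$.

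Finally I would invoke Theorem \ref{DRsub} with this $\delta$, the chosen $\varepsilon = \varepsilon_0$, the linear regularity constant $\kappa$, and the constant $c$. Its conclusion gives, for every $x \in \Ball_{\delta/2}(\hat x)$ and every $x_+ \in T_{DR}x$,
\begin{equation*}
\dist{x_+}{S} \;\leq\; \sqrt{1 + 2\varepsilon_0 + 2\varepsilon_0^2 - \tfrac{1-c}{\kappa^2}}\;\dist{x}{S},
\end{equation*}
and the radicand lies strictly below $1$ by our choice of $\varepsilon_0$, yielding the claimed linear rate $\tilde c < 1$. The only real subtlety in the argument is the order in which the constants are produced: $c$ and $\kappa$ must be fixed \emph{before} $\varepsilon_0$ is chosen, since their values are determined solely by the super- and strong regularity of the configuration and have no dependence on the $(\varepsilon,\delta)$-regularity parameter we are still free to tune.
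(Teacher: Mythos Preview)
Your proposal is correct and follows essentially the same approach as the paper's own proof: fix $\kappa$ from local linear regularity (via strong regularity and Remark~\ref{remark:stronglinear}), fix $c$ from Lemma~\ref{Lemma:LLM}, then use super-regularity to choose $\varepsilon$ small enough that \eqref{eq:requirementDR} holds, take $\delta=\min\{\delta_1,\delta_2,\delta_3\}$, and apply Theorem~\ref{DRsub}. Your explicit remark about the order in which the constants must be produced is exactly the point the paper makes when it says ``for $c$ and $\kappa$ determined by the regularity of the collection \dots\ we can always choose $\varepsilon$.''
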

\begin{proof}
Strong regularity of the collection implies linear regularity with constant $\kappa$ on $\Ball_{\delta_1}(\hat x)$ 
(see Remark \ref{remark:stronglinear}).
Lemma \ref{Lemma:LLM} guaranties the existence of constants $\delta_2>0$ and $c\in (0,1)$ such that \eqref{e:Sigur} holds on $\Ball_{\delta_2}(\hat x)$.
Now, by super-regularity at $\hat x$, for any $\varepsilon$ there exists a $\delta_3$ such that $A$ is $(\varepsilon,\delta_3)$-subregular at $\hat x$.  In other
words, for $c$ and $\kappa$ determined by the regularity of the collection $\{A, B\}$ at $\hat x$, we can always choose $\varepsilon$ (generating a 
corresponding $\delta_3$ radius) so that \eqref{eq:requirementDR} is satisfied on $\Ball_{\delta_3}(\hat x)$.  
Then for $\delta\equiv\min\left\{\delta_1,\delta_2,\delta_3\right\}$, the requirements of Theorem \ref{DRsub} are 
satisfied on $\Ball_\delta(\hat x)$, which completes the proof of linear convergence on
$\Ball_{\delta/2}(\hat x)$.
\end{proof}

\begin{myremark}
The example Example \ref{examples} \eqref{ex:Borwein} has been studied by Borwein and coauthors \cite{BorweinAragon,BorweinSims} where 
they achieve global characterizations of convergence with rates.  Our work does not directly overlap with \cite{BorweinAragon,BorweinSims}
since our results are local, and the order of the reflectors is reversed:  we must reflect first across the subspace, then reflect across the 
nonconvex set; Borwein and coauthors reflect first across the circle.   
\hfill$\Box$
\end{myremark}

\subsection{Douglas-Rachford on Subspaces}
We finish this section with the fact that strong regularity of the intersection is {\em necessary}, not just sufficient for 
convergence of the iterates of the Douglas-Rachford algorithm to the intersection in the affine case. 

\begin{mycor}\label{cor:subspaces}
 Let $A,B$ be two affine subspaces with $A\cap B\neq\emptyset$. Douglas-Rachford converges for any starting point $x_0\in \E$ 
with linear rate to the intersection $A\cap B$ if and only if $A^\perp\cap B^\perp=\left\{0\right\}$.
\end{mycor}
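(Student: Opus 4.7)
The plan is to reduce both implications to the case of \emph{linear} subspaces by translating so that $0 \in A \cap B$; with this normalisation $A$ and $B$ are linear subspaces, the Douglas--Rachford operator is the linear self-map
\[
L \;\equiv\; 2 P_A P_B - P_A - P_B + I,
\]
and the normal cones at every $\hat x \in A \cap B$ are $N_A(\hat x) = A^\perp$ and $N_B(\hat x) = B^\perp$. Since $B^\perp$ is a subspace, $-B^\perp = B^\perp$, and the strong regularity characterisation \eqref{linearregularcone} of $\{A,B\}$ is precisely the hypothesis $A^\perp \cap B^\perp = \{0\}$.

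For the necessity direction, I would pick any nonzero $v \in A^\perp \cap B^\perp$ and compute $T_{DR} v$ directly. Because $v \perp B$ and $0 \in B$, $P_B v = 0$ and $R_B v = -v$; by the same reasoning $P_A(-v) = 0$ and $R_A R_B v = v$, so $T_{DR} v = v$. But $v \notin A \cap B$ (else $v \in A \cap A^\perp = \{0\}$), so the iterates from $x_0 = v$ are constant and stay at positive distance from $A \cap B$, ruling out convergence for this starting point.

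For sufficiency, I would first invoke Theorem \ref{thm:DRmainresult}. The hypothesis gives strong regularity at every $\hat x \in A \cap B$, and since affine subspaces are convex and hence $(0,\infty)$-regular (a fortiori super-regular), the theorem produces $\delta > 0$ and $\tilde c \in (0, 1)$ with
\[
\dist{T_{DR}^n x_0}{A \cap B} \;\leq\; \tilde c^{\,n}\, \dist{x_0}{A \cap B} \qquad \text{for all } x_0 \in \Ball_{\delta/2}(\hat x).
\]

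The main obstacle is upgrading this local estimate to a global one. The step that needs a small argument is to show that $L$ leaves $(A \cap B)^\perp$ invariant: using self-adjointness of $P_A$ and $P_B$, for $u \in A \cap B$ one has $L^* u = 2 P_B P_A u - P_A u - P_B u + u = u$, whence $\langle L v, u\rangle = \langle v, L^* u \rangle = 0$ whenever $v \in (A \cap B)^\perp$. Given any $x_0 \in \E$, writing $x_0 = u + v$ with $u \in A \cap B$ and $v \in (A \cap B)^\perp$ yields $L^n x_0 = u + L^n v$ and $\dist{L^n x_0}{A \cap B} \leq \|L^n v\|$. Positive homogeneity of $L$ then permits rescaling $v$ into $\Ball_{\delta/2}(0)$, applying the local rate, and rescaling back, giving $\|L^n v\| \leq \tilde c^{\,n}\|v\| = \tilde c^{\,n}\,\dist{x_0}{A \cap B}$, which is the required global linear rate.
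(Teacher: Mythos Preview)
Your proof is correct, and it differs from the paper's in two places worth noting.

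For necessity, you compute directly that any nonzero $v\in A^\perp\cap B^\perp$ satisfies $T_{DR}v=v$ while $v\notin A\cap B$. The paper instead invokes the characterisation $\Fix T_{DR}=(A\cap B)+(A^\perp\cap B^\perp)$ from \cite[Theorem 3.5]{BauComLuke}. Your argument is more elementary and self-contained; the cited result of course says more.

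For sufficiency, the paper observes that when $A$ and $B$ are linear subspaces every ingredient of Theorem \ref{thm:DRmainresult} is already \emph{global}: $A$ is $(0,\infty)$-regular, the normal-cone condition \eqref{e:Sigur} holds on all of $\E$ with the constant $c$ of \eqref{e:constantsubspaces} (since $N_A(z)=A^\perp$, $N_B(y)=B^\perp$ independently of $z,y$), and by Theorem \ref{t:Gruene Libanon} local linear regularity of convex cones is automatically global. Hence one may take $\delta=\infty$ in Theorem \ref{thm:DRmainresult} and the local conclusion is the global one with no further work. Your route---local rate from Theorem \ref{thm:DRmainresult}, invariance of $(A\cap B)^\perp$ under the linear map $L$, and a homogeneity rescaling---is valid but longer than necessary. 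On the other hand, your decomposition $x_0=u+v$ with $u=P_{A\cap B}x_0$ yields the additional conclusion $L^n x_0\to u$, i.e.\ convergence of the iterates to the specific point $P_{A\cap B}x_0$, which the paper's distance-only estimate does not make explicit.
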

\begin{proof}
Without loss of generality for $\hat x\in A\cap B$  by shifting the subspaces by $\hat x$ we consider the case of linear subspaces.
By \eqref{linearregularcone},  on subspaces  the condition $A^\perp\cap B^\perp=\left\{0\right\}$ is equivalent to 
strong regularity of the collection $\{A,B\}$ .

If the intersection is strongly regular and $A$ and $B$ are subspaces, then the requirements of Theorem \ref{thm:DRmainresult} are globally satisfied,
so Douglas-Rachford converges with linear rate
\begin{align} 
\tilde c =\sqrt{1-\frac{(1-c)}{\kappa^2}}<1
\end{align} 
where $c\in [0,1)$ (compare \eqref{DRcond1}) now becomes
\begin{align}\label{e:constantsubspaces}
 c=\max \langle u,v\rangle,\qquad u\in A^\perp,~\norm{u}=1,~ v\in B^\perp,~\norm{v}=1,
\end{align}
and $\kappa$ is an associated \emph{global} constant of linear regularity (see Theorem \ref{t:Gruene Libanon}).

On the other hand for $\hat x\in A\cap B$ by \cite[Thm 3.5]{BauComLuke} we get the characterization
 \begin{align} 
\textup{Fix}\, T_{DR}=&(A\cap B)+N_{A-B}(0)\\
=&(A\cap B)+(N_A(\hat x)\cap -N_B(\hat x))\\
=&(A\cap B)+A^\perp\cap B^\perp.
\end{align} 
and so the fix point set of $T_{DR}$ does not coincide with the intersection unless the collection $\{A,B\}$ is 
strongly regular.  In other words, if the intersection is not strongly regular, then convergence to the intersection 
cannot be linear, thus proving the reverse implication by the contrapositive.
\end{proof}

\begin{myremark}[Friedrichs angle \cite{Friedrichs}]
We would like to make a final remark about connection between the notion of the angle of the sets at 
the intersection and the regularity of the collection of sets at points in the intersection.  The operative
notion of angle is the \emph{Friedrichs angle}.  For two subspaces $A$ and $B$ the 
\emph{Friedrichs angle} is the angle $\alpha(A,B)$ in $[0,\pi/2]$ whose cosine is defined by
\begin{align}\label{e:Friedrichs}
c_{F}(A,B):=\sup \left\{ |\langle x, y\rangle|~\middle|
\begin{matrix}
~ x\in A\cap(A\cap B)^\perp,~\norm{x}\leq1,\\~y\in B\cap(A\cap B)^\perp,~\norm{y}\leq 1. 
\end{matrix}\right\}
\end{align}
The Friedrichs angle being less than $1$ is not sufficient for convergence of Douglas-Rachford.
This can be seen by example \ref{examples} \eqref{ex:lines3}.
The Friedrichs angle in this example is the same as in \ref{examples} \eqref{ex:lines2}, but for $x_0\notin \R^2\times\{0\}$  the Douglas-Rachford algorithm does not converge to $\{0\}=A\cap B$.
Another interesting observation is that if, on the other hand, $A^\perp\cap B^\perp=\{0\}$ 
then $(A^\perp\cap B^\perp)^\perp=\E$ which implies that $c_{F}(A^\perp,B^\perp)$ 
coincides with \eqref{e:constantsubspaces} and by
\cite[Theorem 2.16]{Deutschangle} $c_{F}(A^\perp,B^\perp)$ then coincides with $c_{F}(A,B)$. 
So if the Douglas-Rachford algorithm on subspaces converges linearly, then the rate of convergence is 
dependent on the Friedrichs angle.
A detailed analysis regarding the relation between the Friedrichs angle and linear convergence of MAP can be 
found in \cite{DeutschCPA01,DeutschCPA03}.
\hfill$\Box$\end{myremark}

\section{Concluding Remarks}

In the time that has passed since first submitting our manuscript for publication 
we learned about an optimal linear convergence result for Douglas-Rachford applied to 
$\ell_1$ optimization with an affine constraint using different techniques \cite{DemanetZhang13}.  
The modulus of linear regularity does not recover optimal convergence results (see Remark \ref{remark:overview}),
but we suspect this is an artifact of our proof technique.  The question remains whether there is a quantitative primal 
definition of a angle between two sets that recovers the same results for MAP as \cite{BLPW2}.
This could also be useful to achieve optimal linear convergence results for Douglas-Rachford in general.
Also, as we noted in the introduction, it is well-known that the fixed-point set of the Douglas-Rachford operator is in 
general bigger than the intersection of the sets, and Corollary \ref{cor:subspaces} stating that the iterates converge 
to the intersection if and only if the collection of sets is strongly regular is a consequence of this.  In the 
convex case, the {\em shadows} of the iterates still converge.  We leave a fuller investigation of the 
shadows of the iterates and the angles between the 
sets at the intersection in the nonconvex setting to future work.  

Another direction of future work will be to extend this analysis more generally to fixed point mappings
built upon functions and more general set-valued mappings, but also in particular  {\em proximal} operators and reflectors.  
The generality of our approach makes such extensions
quite natural.  Indeed, local linear regularity of collections of sets can be shown to be related to 
{\em strong metric subregularity} of set-valued mappings which guarantees that the condition \eqref{eq:Tcoerciv}
of Lemma \ref{Tconv} is satisfied.  Of course, the difficulty remains to show that the 
the mappings are indeed metrically subregular.  

\section*{Acknowledgments} The authors gratefully acknowledge the support of DFG-SFB grant 755-TPC2.

\addcontentsline{toc}{section}{Literature}

\end{document}